\title{On Canonical Homomorphisms of Tensor Sheaves}
\DeclareSymbolFont{cyrletters}{OT2}{wncyr}{m}{n}
\DeclareMathSymbol{\Sha}{\mathalpha}{cyrletters}{"58}
\date{}
\author{Jianke Chen\footnote{ Department of Mathematics, Capital Normal University, Beijing 100048,China.
\quad Email:jkchen003@gmail.com\quad Tel:+86-13811897120}}
\begin{document}
\maketitle \theoremstyle{definition}
\newtheorem{theorem}{Theorem}[section]
\newtheorem{example}[theorem]{Example}
\newtheorem{definition}[theorem]{Definition}
\newtheorem{cor}[theorem]{Corollary}
\newtheorem{prop}[theorem]{Proposition}
\newtheorem{remark}[theorem]{Remark}
\newtheorem{lemma}[theorem]{Lemma}
\newtheorem{exercise}[theorem]{Exercise}
\newtheorem{fact}[theorem]{F\/ACT}

\begin{abstract}
In this paper we define tensor modules(sheaves) of {\sl Schur type},
or of {\sl generalized Schur type} associated with the give module
(sheaf), using the so-called {\sl Schur} functors. Then using global
method we construct canonical homomorphisms between these
modules(sheaves). We will get canonical isomorphisms if the original
sheaf is locally free using idea of algebraic geometry, which is in
fact a generalization of result in linear algebra. In the final
section, we give canonical complexes using homomorphisms constructed
before, and these complexes will become split exact sequence if
further condition holds. And we could use local method to give
concrete descriptions of these canonical homomorphisms.
\end{abstract}

\textbf{Keywords:}

Tensor Sheaves of {\sl Schur} Types, Generalized isomorphism on
determinant, Canonical Homomorphisms,Canonical Split Sequences.

\vskip 10pt \textbf{AMS Subject Classifications:}

13A05, 1400, 15A15, 18E10,

\vskip 10pt \textbf{Acknowledgements:}

I wish to thank Prof.Ke-zheng Li, Prof. Zi-feng Yang for their
heuristic discussions with me on this topic.

\section{Introduction}

Tensor Power, Symmetric Power and Wedge Power are basic tools used
in mathematics. We are going to list its applications in linear
algebra, representation theory and algebraic geometry.

For linear algebra, let~$V$~be a~$n$~dimensional vector space
over~$k$, and~$f\in\,End_k(V)$, then using functorial property of
tensor product, we may get the following
homomorphism~$T_k^r(f):T_k^r(V)\rightarrow\,T_k^r(V)$. Similar
homomorphisms can also be induced for symmetric power and wedge
power. Then there exists the following
equality~$det(T_k^r(f))=(det(f))^{rn^{r-1}}$,
$det(S_k^r(f))=(det(f))^{\frac{(n+r-1)!}{n!(r-1)!}}$~and~$det(\wedge_k^r(f))=
(det(f))^{\frac{(n-1)!}{(r-1)!(n-r)!}}$. A special case of the
statements before is the following equality:
$$\left|\begin{array}{cccc}a^3&3a^2b&3ab^2&b^3\\a^2c&a^2d+2abc&b^2c+2abd&b^2d\\
ac^2&bc^2+2acd&ad^2+2bcd&bd^2\\c^3&3c^2d&3cd^2&d^3\end{array}\right|
=\left|\begin{array}{cc}a&b\\c&d\end{array}\right|^6$$
where~$a,b,c,d$~are elements of commutative ring~$R$, see \cite{L1}
or \cite{L2} for details. In these references, the proof is an
algebraic proof, in this place we are going to prove similar results
into a more general fact using geometric method.

Next we are going to state some applications of these operations in
representation theory. For a given representation, we may use these
operations to construct new representations, such as the basic
example that if~$V$~is a finite dimensional complex vector space,
then we have a canonical~$GL(V)-$invariant
decomposition~$V\otimes\,V\cong\,Sym^2(V)\oplus\wedge^2(V)$, and
more generally, we may
write~$V\otimes\,V\otimes\,V\cong\,Sym^3(V)\oplus\wedge^3(V)\oplus\mbox{\
another spaces}$, see \cite{FH} or \cite{S} for details. Schur
functors are generalized symmetric powers and wedge powers used in
construction of new representations. As a general concrete
example,there admits the following
decomposition,~$Sym^n(Sym^2V)\cong\bigoplus\limits_{i=0}^{[\frac{n}{2}]}Sym^{2n-4i}V$~
One basic fact for complex representation theory is
that~$\mathbb{C}$~is an algebraic closed field, so if we want to
extend these results into general base, similar results may NOT
globally hold, which means these properties(isomorphisms, or
equalities) hold if we add some localization property.

Finally these operations occur in algebraic geometry, such as
considering the Grassmanian space~$\mathbb{G}(n,m)$, which means the
set of~$m$~dimensional subspaces of a given~$n$~dimensional vector
space. The standard procedure is taking~$r$-th wedge of~$V$, we
denote this space by~$\wedge^r(V)$. In this method,
every~$m$~dimensional subspace becomes~$1$~dimensional vector space,
then as well known, the set of these~$1$~dimensional subspaces
becomes a geometric object, namely~$P(\wedge^r(V))$, or
equivalently~$Proj(S(\wedge^r(V)))$. In this construction, symmetric
power and wedge power occurs naturally.


In this paper, we will define tensor modules (resp. sheaves) of {\sl
schur type}, and more generally, {\sl generalized Schur type}
associated with general~$R-$modules (resp. $\mathscr{O}_X-$modules),
and investigate canonical homomorphisms between these kind of
modules. In section 2, using the idea of algebraic geometry, we will
give a geometric global canonical isomorphism of such kind of
modules, if the original module is locally free. In section 3, we
will study several kinds of canonical homomorphisms of tensor
sheaves. Note that for general tensor modules of {\sl Schur type},
we can not use local coordinates as for the locally free case. The
reason is for locally free sheaves, the gluing data information is
reflected by a~$n\times\,n$~invertible matrix, but for general
sheaves, we know nothing about the gluing data information. The main
idea for our method is that tensor sheaves of {\sl Schur type} admit
canonical permutation group's action. In fact, this method is
already used in Prof. ke-zheng Li's lectures on de Rham Complexes.
For details, see \cite{L3}.

In the final section, we will give two concrete examples of general
theorems, in which case that these canonical homomorphisms will make
complexes. And under some further assumption, these complexes will
become split exact sequences. We may use permutations to write these
canonical homomorphisms clearly, and we will use local method
(locally defined homomorphisms, then check these homomorphisms are
independent of the given basis), and check that it it the same with
global method.

\section{SOME PREPARATIONS}

Let~$R$~be a commutative ring with 1, and~$M$~be an~$R$-module.
Using tensor product one can construct~$R$-modules with~$M$~in the
following ways:

\vskip 5pt\leftskip=30pt\parindent=-10pt i) symmetric product
(~$S_R^n(M)$~for any~$n\in\mathbb{N}$~);

ii) wedge product (~$\wedge_R^n(M)$~for any~$n\in\mathbb{N}$~);

iii) tensor product over~$R$.

\vskip 5pt\leftskip=0pt\parindent=20pt One can use the above
constructions repeatedly, for a finite number of (but arbitrarily
many) times. For example,
\begin{equation}\label{f1}\mathfrak{S}(M)=S_R^r(S_R^n(M)\otimes_R\wedge_R^m(M))^{\otimes_R2}\end{equation}
Such a construction procedure is called a {\sl Shur type}. In many
cases (e.g in algebraic geometry or representation theory) such kind
of complex construction procedures of modules appear.

Note that any Shur type is canonical, i.e. for any Shur type
$\mathfrak{S}$, any $R$-module homomorphism $f:M\to N$ induces an
$R$-module homomorphism
$\mathfrak{S}(f):\mathfrak{S}(M)\to\mathfrak{S}(N)$ canonically.

For any Shur type $\mathfrak{S}$, the construction procedure gives
an epimorphism $M^{\otimes_Rd}\to\mathfrak{S}(M)$. We call $d$ the
{\sl degree} of $\mathfrak{S}$, denoted by $d(\mathfrak{S})$. For
example, it is not hard to see the Shur type in (\ref{f1}) has
$d(\mathfrak{S})=2r(n+m)$.

In some cases one also uses the following way of construction in
addition to i-iii):

\vskip 5pt\leftskip=30pt\parindent=-10pt iv) homomorphism module
$Hom_R(\cdot ,\cdot )$,

\vskip 5pt\leftskip=0pt\parindent=20pt especially for locally free
modules of finite rank. For example,
\begin{equation}
\label{f2}\mathfrak{S}'(M)=S_R^r(S_R^n(M)\otimes_RHom_R(\wedge_R^m(M),M^{\otimes_R2})
\end{equation}
Such a construction procedure is called a {\sl generalized Shur
type}.

Let $X$ be a scheme and $\mathscr{F}$ be a quasi-coherent sheaf on
$X$. Then any Shur type $\mathfrak{S}$ defines a quasi-coherent
sheaf $\mathfrak{S}(\mathscr{F})$, because $\mathfrak{S}$ is
canonical. For example, if the Shur type $\mathfrak{S}$ is as in
(\ref{f1}), then
$$\mathfrak{S}(\mathscr{F})=S_{\mathscr{O}_X}^r(S_{\mathscr{O}_X}^n(\mathscr{F})\otimes_{\mathscr{O}_X}
\wedge_{\mathscr{O}_X}^m(\mathscr{F}))^{\otimes_{\mathscr{O}_X}2})$$
Furthermore, if $f:\mathscr{F}\to\mathscr{F}'$ is a homomorphism of
quasi-coherent sheaves on $X$, then $f$ induces a homomorphism
$\mathfrak{S}(f):\mathfrak{S}(\mathscr{F})\to\mathfrak{S}(\mathscr{F}')$
canonically.

If $\mathfrak{S}'$ is a generalized Shur type and $\mathscr{F}$ is
locally free of finite rank, one can also define
$\mathfrak{S}'(\mathscr{F})$. For example, if the Shur type
$\mathfrak{S}'$ is as in (\ref{f2}), then
$$\mathfrak{S}'(\mathscr{F})=S_{\mathscr{O}_X}^r(S_{\mathscr{O}_X}^n(\mathscr{F})\otimes_{\mathscr{O}_X}
\mathscr{H}om_{\mathscr{O}_X}(\wedge_{\mathscr{O}_X}^m(\mathscr{F}),\mathscr{F}^{\otimes_{\mathscr{O}_X}2})$$

\begin{definition}
Let $R$ be a commutative ring with 1, and $M$ be an $R$-module. By a
{\sl tensor module} (resp. {\sl generalized tensor module}) of $M$
over $R$ we mean a direct sum
$\mathfrak{S}(M)=\mathfrak{S}_1(M)\oplus\cdots\oplus\mathfrak{S}_n(M)$
(resp.
$\mathfrak{S}'(M)=\mathfrak{S}_1^{\prime}(M)\oplus\cdots\oplus\mathfrak{S}_n^{\prime}(M)$),
for some Shur types $\mathfrak{S}_1$,..., $\mathfrak{S}_n$ (resp.
generalized Shur types $\mathfrak{S}_1^{\prime}$,...,
$\mathfrak{S}_n^{\prime}$).

Let $X$ be a scheme and $\mathscr{F}$ be a quasi-coherent sheaf on
$X$. By a {\sl tensor sheaf} of $\mathscr{F}$ we mean a direct sum
$\mathfrak{S}(\mathscr{F})=\mathfrak{S}_1(\mathscr{F})\oplus\cdots\oplus\mathfrak{S}_n(\mathscr{F})$,
for some Shur types $\mathfrak{S}_1$,..., $\mathfrak{S}_n$.
Furthermore, if $\mathscr{F}$ is locally free of finite rank, then
by a {\sl generalized tensor sheaf} of $\mathscr{F}$ we mean a
direct sum
$\mathfrak{S}'(\mathscr{F})=\mathfrak{S}_1^{\prime}(\mathscr{F})\oplus\cdots\oplus\mathfrak{S}_n^{\prime}(\mathscr{F})$,
for some generalized Shur types $\mathfrak{S}_1^{\prime}$,...,
$\mathfrak{S}_n^{\prime}$.
\end{definition}

\vskip 10pt By the above argument, we see that any $\mathfrak{S}$ in
Definition 1 gives a covariant functor
$\mathfrak{M}_R\to\mathfrak{M}_R$, where $\mathfrak{M}_R$ is the
category of $R$-modules; and a covariant functor
$\mathfrak{C}\mathfrak{o}\mathfrak{h}_X\rightarrow\mathfrak{C}\mathfrak{o}\mathfrak{h}_X$,
where $\mathfrak{C}\mathfrak{o}\mathfrak{h}_X$ is the category of
quasi-coherent sheaves over $X$.

\section{CANONICAL ISOMORPHISM }

In this section, we are going to prove a global isomorphism of
tensor sheaves of {\sl Schur type} associated with locally free
sheave~$\mathscr{F}$~of finite rank.

\begin{theorem}
Let~$(X,\mathscr{O}_X)$~be a scheme and~$\mathscr{F}$~be a locally
free~$\mathscr{O}_X-$module sheaf of rank~$n$,~$\mathscr{E}$~is a
direct sum of tensor sheaves of schur type associated
to~$\mathscr{F}$,~$\mathscr{E}=\bigoplus_i\mathscr{E}_i$, with
each~$\mathscr{E}_i$~is of rank~$m_i$~and of degree~$d_i$. Then we
have the following canonical isomorphism of invertible sheaves:
$$(\mbox{det}\mathscr{E})\cong(\mbox{det}\mathscr{F})^{\otimes\sum_i\frac{m_id_i}{n}}.$$
For canonical, we mean if we set~$f:X'\rightarrow\,X$~be a morphism
of schemes, set~$\mathscr{E}'=f^*\mathscr{E}$~and
set~$\mathscr{F}'=f^*\mathscr{F}$, then apply~$f^*$~to the above
isomorphism, we
get$$(\mbox{det}\mathscr{E}')\cong(\mbox{det}\mathscr{F}')^{\otimes\sum_i\frac{m_id_i}{n}}.$$
\end{theorem}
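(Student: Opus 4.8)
The plan is to reduce the global statement to a local computation in linear algebra, using the fact that $\mathscr{F}$ is locally free of rank $n$ and that each $\mathscr{E}_i$ is built from $\mathscr{F}$ by a Schur-type procedure, hence is also locally free, of rank $m_i$. First I would recall that for any locally free sheaf $\mathscr{G}$ of rank $m$ and any line bundle $\mathscr{L}$ there is a canonical isomorphism $\det(\mathscr{G}\otimes\mathscr{L})\cong(\det\mathscr{G})\otimes\mathscr{L}^{\otimes m}$, and that $\det$ turns direct sums into tensor products: $\det(\mathscr{E})\cong\bigotimes_i\det(\mathscr{E}_i)$. So it suffices to prove, canonically in $\mathscr{F}$, that $\det(\mathscr{E}_i)\cong(\det\mathscr{F})^{\otimes m_id_i/n}$ for each Schur-type summand; the exponents then add up to $\sum_i m_id_i/n$. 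Note that $m_id_i/n$ is an integer: this itself should be checked, and follows from the local linear-algebra identities quoted in the introduction (e.g.\ $\det T^r_k(f)=(\det f)^{rn^{r-1}}$ and the analogues for $S^r$, $\wedge^r$), which show the determinant of any Schur-type operation on an $n$-dimensional space is $(\det f)$ to an explicit integral power; tracking that this power equals $m_id_i/n$ amounts to a bookkeeping lemma on how rank and degree behave under $S^r,\wedge^r,\otimes$.

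Next I would set up the canonical map. Work first in the \emph{split} universal situation: over $R=\mathbb{Z}$ let $M=R^n$, $\mathrm{GL}_n$ acting, and consider the Schur type $\mathfrak{S}$. The functoriality of $\mathfrak{S}$ gives $\mathfrak{S}(g)\colon\mathfrak{S}(M)\to\mathfrak{S}(M)$ for $g\in\mathrm{GL}_n$, hence $\det\mathfrak{S}(g)\in R^\times$ is a character of $\mathrm{GL}_n$, so $\det\mathfrak{S}(g)=(\det g)^{e(\mathfrak{S})}$ for a unique integer $e(\mathfrak{S})$ (every algebraic character of $\mathrm{GL}_n$ is a power of $\det$). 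The content of the claim is $e(\mathfrak{S})=m\,d(\mathfrak{S})/n$ where $m=\operatorname{rk}\mathfrak{S}(M)$. I would prove this by the multiplicativity just mentioned: $e$ is additive over $\otimes$ when weighted by ranks and behaves predictably under $S^r$ and $\wedge^r$, and one checks the formula holds on the three generating operations, then propagates it through an arbitrary nesting by induction on the construction procedure. Once $e(\mathfrak{S})=md/n$ is known over $\mathbb{Z}$, the canonical isomorphism $\det\mathfrak{S}(\mathscr{F})\cong(\det\mathscr{F})^{\otimes md/n}$ over an arbitrary scheme is obtained by descent along a trivializing open cover $\{U_\alpha\}$: on each $U_\alpha$ both sides are canonically $\mathscr{O}_{U_\alpha}$, and on overlaps $U_{\alpha\beta}$ the two trivializations differ by $\det\mathfrak{S}(g_{\alpha\beta})$ on the left and $(\det g_{\alpha\beta})^{md/n}$ on the right, which agree by the universal identity; hence the local isomorphisms glue. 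Compatibility with pullback $f^*$ is then automatic, since $f^*$ commutes with $\det$, with $\otimes$, with all the Schur-type operations, and with the formation of the gluing cocycle.

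The main obstacle is the combinatorial lemma: proving $e(\mathfrak{S})=m\,d(\mathfrak{S})/n$ for an \emph{arbitrary} iterated Schur type, not just for one layer. One must track a triple $(\operatorname{rk},d,e)$ through each of the operations $S^r(-)$, $\wedge^r(-)$, $\otimes$ and verify that the relation $e=(\operatorname{rk}\cdot d)/n$ is preserved; the subtlety is that $S^r$ and $\wedge^r$ applied to a rank-$\ell$ module change the rank by binomial coefficients $\binom{\ell+r-1}{r}$ and $\binom{\ell}{r}$ respectively, and one needs the classical determinant exponents $\binom{\ell+r-1}{r}\frac{r}{\ell}$ and $\binom{\ell}{r}\frac{r}{\ell}$ for these — i.e.\ precisely the linear-algebra facts cited at the start of the paper — together with the additive-over-$\otimes$ rule $e(A\otimes B)=\operatorname{rk}(B)\,e(A)+\operatorname{rk}(A)\,e(B)$. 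Assembling these into a clean inductive statement (and noting en route that $n\mid m_id_i$ for every summand, so the exponent in the theorem is a genuine integer and $(\det\mathscr{F})^{\otimes md/n}$ makes sense) is the heart of the argument; everything else is the formal glue-and-descend mechanism above, which I expect to be routine once the universal identity over $\mathbb{Z}$ is in hand.
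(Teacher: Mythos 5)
Your proposal is correct in outline and shares the paper's overall skeleton (split $\det$ over the direct sum, reduce to a single Schur type, reduce the key identity $\det\mathfrak{S}(\alpha)=(\det\alpha)^{md/n}$ to the universal matrix over $\mathbb{Z}[x_{ij}]_{\det(x_{ij})}$, then glue by canonicity), but you establish the universal identity by a genuinely different key lemma. The paper argues geometrically: the locus where $\det\alpha'=(\det\alpha)^{md/n}$ is closed in $\mathrm{Spec}\bigl((\mathbb{Z}[x_{ij}])_{\det(x_{ij})}\bigr)$, it contains the nonempty open locus of matrices with $n$ distinct eigenvalues (checked on an eigenbasis after passing to a splitting field, the equal multiplicity $\frac{md}{n}$ of each eigenvalue coming from a symmetry argument), and the base is irreducible, so the identity holds everywhere; this makes the argument self-contained and in effect reproves, rather than assumes, the classical exponent formulas for $\det S^r(f)$, $\det\wedge^r(f)$, $\det T^r(f)$. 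You instead note that $\det\mathfrak{S}(g)$ is a unit of $\mathbb{Z}[x_{ij}]_{\det}$, hence $\pm(\det g)^e$, hence $(\det g)^e$ after evaluating at the identity, and then pin down $e=\frac{md}{n}$ by structural induction on the Schur type, taking precisely those classical one-layer formulas (plus the weighted additivity $e(A\otimes B)=\mathrm{rk}(B)e(A)+\mathrm{rk}(A)e(B)$) as input; your inductive bookkeeping is consistent, and it also delivers the integrality $n\mid m_id_i$. Both routes are valid: yours trades the splitting-field/density argument for a formal character-of-$\mathrm{GL}_n$ argument plus combinatorics, at the price of importing the cited linear-algebra identities that the paper's method obtains as a byproduct. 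If you want your route fully self-contained, you can even skip the induction: since the degree-$d$ epimorphism $M^{\otimes d}\to\mathfrak{S}(M)$ is natural, $\mathfrak{S}(\lambda\,\mathrm{id}_M)=\lambda^{d}\,\mathrm{id}$, so evaluating your character at scalar matrices gives $\lambda^{md}=\lambda^{ne}$ and hence $e=\frac{md}{n}$ directly.
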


\begin{proof}
Our proof will be divided into several steps. Note
that~$\mathscr{E}=\bigoplus_i\mathscr{E}_i$, using the wedge
functor, we have:
$$\begin{array}{rl}
\mbox{det}\mathscr{E}&\cong(\det(\bigoplus_i\mathscr{E}_i))\cong
\bigoplus_{i_1+i_2+\cdots=d_1+d_2+\cdots}
\wedge^{i_1}(\mathscr{E}_1)\otimes_{\mathscr{O}_X}
\wedge^{i_2}(\mathscr{E}_2)\otimes\cdots\\
&\cong\wedge^{m_1}(\mathscr{E}_1)\otimes_{\mathscr{O}_X}
\wedge^{m_2}(\mathscr{E}_2)\otimes_{\mathscr{O}_X}\cdots\\
&\cong\mbox{det}(\mathscr{E}_1)\otimes_{\mathscr{O}_X}
\mbox{det}(\mathscr{E}_2)\otimes_{\mathscr{O}_X}\cdots\\
\end{array}$$
Hence we only need to prove for each~$i$, the following equality
holds:$$\mbox{det}\mathscr{E}_i\cong(\mbox{det}\mathscr{F})^{\otimes\frac{m_id_i}{n}}.$$
So we may reduce the question to the case where~$\mathscr{E}$~is a
tensor sheaf of schur type of rank~$m$~and of degree~$d$~associated
to free~$\mathscr{O}_X-$sheaf~$\mathscr{F}$~of rank~$n$.

setp1. First assume~$\frac{md}{n}$~is an integer, We will prove this
fact in the end of this step.
Then~$\mbox{det}\mathscr{E}$~and~$(\mbox{det}\mathscr{F})^{\otimes\frac{md}{n}}$~are
all invertible sheaves, so locally checking, there always admits an
isomorphism, the main problem is how to glue these local data to get
a global isomorphism. Our method is to prove this isomorphism is
canonical. Take an affine open
neighborhood~$U=\mbox{Spec}(R)$~of~$X$,
then~$\mathscr{F}|_U\cong\,\widetilde{M}$, where~$M$~is
free~$R-$module of
rank~$n$~and~$\mathscr{E}|_U\cong\widetilde{M'}$~is a tensor module
of schur type associated to~$M$. Suppose~$M$~admits a
basis~${m_i}$~such that~$\varphi\,m_i=\lambda_im_i$~for
some~$\varphi\in\,GL(n,R)$~and~$\lambda_i\in\,R$~, then according to
our definition of tensor modules of schur type, which is in fact the
functorial property of these operations(symmetric products, for
example), ~$M'$~has a canonical basis which is induced from~${m_i}$,
say~${m'_i}$, and~$\varphi$~induces an automorphism~$\varphi'$~acts
on~$M'$, hence~$\varphi'm_i'=\mu_im_i'$~for each~$i$, and
each~$\mu_i$~is a product of~$\lambda_i$s of length~$d$, so the sum
of these eigenvalues is~$dm$(with multiplications). Also note
that~$GL(n,R)$'s diagonal action is also kept to~$M'$, so all the
multiplications of~$\lambda_i$~in~$\mu_i$~should be equal, which
means the multiplication is just~$\frac{md}{n}$.

step2. Using notations as above, ~$S_R^rM$~has a canonical basis
inherited from~$M$,
say~$m_{i_1}\otimes\cdots\otimes\,m_{i_r}$~with~$i_1\leq\,i_2\leq\cdots\leq\,i_r$~and
~$\wedge_R^rM$~has a canonical basis inherited from~$M$, which is
~$m_{i_1}\wedge\cdots\wedge\,m_{i_r}$~with~$i_1<i_2<\cdots<i_r$.
According to our definitions,~$detM'$~have a canonical basis
inherited from given basis of~$M$. Hence we have
as~$(detM)^{\otimes_R\frac{md}{n}}$~and~$detM'$~are free~$R-$modules
of rank 1, we may define an isomorphism from one generator to the
other generator,
$$\begin{array}{cccc}
\phi:&(\mbox{det}M)^{\otimes_R\frac{md}{n}}&\longrightarrow&\mbox{det}M'\\[2mm]
&(m_1\wedge\cdots\wedge\,m_n)^{\otimes\frac{md}{n}}&\mapsto&m_1'\wedge\cdots\wedge\,m_m'
\end{array}.$$
Next we need to prove this is a canonical isomorphism, which
means if we take~$\alpha\in\,GL(n,R)$~,then~$\alpha$~also induces an
automorphism on~$M'$~,say~$\alpha'$~, then there exists the
following commutative diagram:
\[
\begin{CD}
(detM)^{\otimes_R\frac{md}{n}}@>{\phi}>>detM'\\
@VV{(det\alpha)^{\frac{md}{n}}}V@V{det\alpha'}VV\\
(detM)^{\otimes_R\frac{md}{n}}@>{\phi}>>detM'
\end{CD}
\]
So we only need to check~$det\alpha'=(det\alpha)^{\frac{md}{n}}$~for
all~$\alpha\in\,GL(n,R)$~.Note that according to step1,
if~$\alpha$~admits a basis such that~$\alpha$~is diagonal, then the
equality already holds!

As an element~$\alpha=(\alpha_{ij})\in\,GL(n,R)$~corresponds to an
algebraic homomorphism
$(\mathbb{Z}[x_{ij}]_{1\leq\,i,j\leq\,n})_{det(x_{ij})}\rightarrow\,R$~,
which is defined by~$\alpha_{ij}\mapsto\,x_{ij}$. In fact this is a
morphism from~$\mbox{Spec}(R)$~to
$\mbox{Spec}((\mathbb{Z}[x_{ij}]_{1\leq\,i,j\leq\,n})_{det(x_{ij})})$~,
so in order to check the equality over~$R$~, we only need to check
it
over~$X=Spec((\mathbb{Z}[x_{ij}]_{1\leq\,i,j\leq\,n})_{det(x_{ij})})$.

step3. As~$det\alpha'=(det\alpha)^{\frac{md}{n}}$~defines a closed
subset of~$X$, say~$V$. We are going to prove it contains a nonempty
open subset~$U$~, hence the taking closure we
have~$\bar{U}\subset\bar{V}\subset\bar{V}$. As~$X$~is irreducible,
every nonempty open subset is dense, hence we have~$V=X$.

Set~$U=\{\varphi|\ \chi_\varphi\ \mbox{has n different
eigenvalues}\}$, according to step1, we have~$U\subset\,V$. Also
note that~$\chi_\varphi$~has~$n$~different eigenvalues if and only
if~$\triangle_\varphi\neq0$, so~$U$~is an open subset of~$X$.
Set~$R_0=(\mathbb{Z}[x_{ij}]_{1\leq\,i,j\leq\,n})_{det(x_{ij})}$,~$K=\mbox{q.f.}(R_0)$~and~$\varphi'=(x_{ij})$.
Then~$\chi_{\varphi'}=|\lambda\,I-(x_{ij})|$~be the characteristic
polynomial of~$(x_{ij})$, and~$L/K$~be the splitting filed
of~$\chi_{\varphi'}$~over~$K$, then~$\varphi'$~has~$n$~different
eigenvalues~$\lambda_i\in\,L$~such
that~$\varphi'v_i=\lambda_iv_i$~for~$v_i\in\,L^n$~for
all~$i=1,\cdots\,,n$. We may take a common denominator~$f$~such that
these equalities hold in a open neighborhood of the generic point
of~$R_0$, hence~$U\neq\emptyset$, taking closure we have~$V=X$.

As the isomorphism is canonical, hence we may glue together to get a
global isomorphism.
\end{proof}

\section{Canonical Homomorphisms}

In this section,we are going to study canonical homomorphisms of
tensor sheaves. More precisely,we will construct canonical
homomorphisms for~$\mathscr{O}_X$-module~$\mathscr{F}$. In order to
define homomorphisms of tensor sheaves of~$\mathscr{F}$, note that
if~$\mathscr{F}$~is locally free of rank~$n$, we can use local
generators of~$\mathscr{F}(U)$~for open subset~$U\subset\,X$. The
reason is that for locally free sheaves, when we take local
generators, any two different choices of generators are differed by
a~$n\times\,n$~invertible matrix, and the gluing data is also
reflected by~$n\times\,n$~invertible matrix, so in order to check
the morphism is well defined, we just need to fix a basis, and prove
the morphism is independent of the basis we choose before. But for
general~$\mathscr{O}_X-$module~$\mathscr{F}$, we don't know any
information of local generators and information of gluing data. But
for general tensor sheaves, it admits a canonical permutation
group's action, this is the point we use in this paper. In fact the
main idea is already occurred in the de Rham Complex in Prof.
Ke-Zheng Li's lectures in "Moduli Space And Its Applications". For
details, see \cite{L3}.

Let~$k$~and~$n$~be two integers,~$(X,\mathscr{O}_X)$~be a scheme and
~$\mathscr{F}$~be an~$\mathscr{O}_X-$module.

\begin{theorem}
There exists canonical homomorphism$$\varphi_{n,k}:\
S^n_{\mathscr{O}_X}(S^k_{\mathscr{O}_X}\mathscr{F})
\longrightarrow\,S^k_{\mathscr{O}_X}(\wedge^n_{\mathscr{O}_X}\mathscr{F}).$$if~$k$~is
an even integer.

There exists canonical homomorphism
$$\phi_{k,n}:\ S^k_{\mathscr{O}_X}(\wedge^n_{\mathscr{O}_X}\mathscr{F})\longrightarrow
S^n_{\mathscr{O}_X}(S^k_{\mathscr{O}_X}\mathscr{F}).$$
\end{theorem}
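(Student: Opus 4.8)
The plan is to build both homomorphisms out of $\mathscr{F}^{\otimes_{\mathscr{O}_X}kn}$ --- the \emph{grid} of $kn$ tensor factors, which we picture as a $k\times n$ array, so that each of the $n$ columns has $k$ entries and each of the $k$ rows has $n$ entries --- together with four canonical $\mathscr{O}_X$-linear morphisms attached to it. First, two quotient morphisms: symmetrizing inside each column (to get $S^k_{\mathscr{O}_X}\mathscr{F}$) and then symmetrizing the $n$ columns among themselves gives $q\colon\mathscr{F}^{\otimes_{\mathscr{O}_X}kn}\to S^n_{\mathscr{O}_X}(S^k_{\mathscr{O}_X}\mathscr{F})$; antisymmetrizing inside each row (to get $\wedge^n_{\mathscr{O}_X}\mathscr{F}$) and then symmetrizing the $k$ rows among themselves gives $p\colon\mathscr{F}^{\otimes_{\mathscr{O}_X}kn}\to S^k_{\mathscr{O}_X}(\wedge^n_{\mathscr{O}_X}\mathscr{F})$. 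Second, two canonical lifting morphisms: the signed sum over $S_n$ of the permutations of the $n$ factors defines $s\colon\wedge^n_{\mathscr{O}_X}\mathscr{F}\to\mathscr{F}^{\otimes_{\mathscr{O}_X}n}$, well defined because the antisymmetrizer kills any tensor with a repeated factor (the standard sign-cancellation, valid over $\mathbb{Z}$); and the unsigned sum over $S_k$ defines $s'\colon S^k_{\mathscr{O}_X}\mathscr{F}\to\mathscr{F}^{\otimes_{\mathscr{O}_X}k}$, well defined because the symmetrizer kills the submodule defining $S^k$. All four morphisms come from universal formulas, hence are natural in $\mathscr{F}$, compatible with restriction to opens, and compatible with every pullback $f^{*}$.

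To construct $\phi_{k,n}$ I would form $q\circ s^{\otimes k}\colon(\wedge^n_{\mathscr{O}_X}\mathscr{F})^{\otimes_{\mathscr{O}_X}k}\to\mathscr{F}^{\otimes_{\mathscr{O}_X}kn}\to S^n_{\mathscr{O}_X}(S^k_{\mathscr{O}_X}\mathscr{F})$, where $s$ fills the $k$ rows of the array. The key observation is that this composite is invariant under permuting the $k$ outer factors: a transposition of two outer factors swaps two rows of the array, hence swaps two entries inside each column, and this is killed once each column is mapped into the symmetric power $S^k_{\mathscr{O}_X}\mathscr{F}$ inside $q$; permuting entire rows produces no sign, since the outer tensor factors of $\mathscr{F}^{\otimes_{\mathscr{O}_X}kn}$ carry none. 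Therefore $q\circ s^{\otimes k}$ factors through the quotient $(\wedge^n_{\mathscr{O}_X}\mathscr{F})^{\otimes_{\mathscr{O}_X}k}\to S^k_{\mathscr{O}_X}(\wedge^n_{\mathscr{O}_X}\mathscr{F})$, giving $\phi_{k,n}$; no parity hypothesis enters here.

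To construct $\varphi_{n,k}$ I would dually form $p\circ(s')^{\otimes n}\colon(S^k_{\mathscr{O}_X}\mathscr{F})^{\otimes_{\mathscr{O}_X}n}\to\mathscr{F}^{\otimes_{\mathscr{O}_X}kn}\to S^k_{\mathscr{O}_X}(\wedge^n_{\mathscr{O}_X}\mathscr{F})$, where $s'$ fills the $n$ columns. Now a transposition of two outer factors swaps two columns, hence swaps two entries inside each of the $k$ rows; after each row is mapped into $\wedge^n_{\mathscr{O}_X}\mathscr{F}$ inside $p$, this multiplies by $(-1)^k$, so the composite changes by the single global sign $(-1)^k$ under each such transposition. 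When $k$ is even this sign is trivial, the composite is symmetric in its $n$ arguments, and it descends to $\varphi_{n,k}\colon S^n_{\mathscr{O}_X}(S^k_{\mathscr{O}_X}\mathscr{F})\to S^k_{\mathscr{O}_X}(\wedge^n_{\mathscr{O}_X}\mathscr{F})$; when $k$ is odd the same recipe instead yields a canonical morphism out of $\wedge^n_{\mathscr{O}_X}(S^k_{\mathscr{O}_X}\mathscr{F})$, which is exactly why evenness is assumed. Canonicity of both maps is then automatic: each is assembled from quotient maps, symmetrizers and antisymmetrizers, all natural in $\mathscr{F}$, so the constructions glue from affine charts and commute with $f^{*}$, as required by the statement.

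The only genuinely delicate step is the permutation bookkeeping in these two invariance checks: one must confirm that the lifts $s$ and $s'$ contribute no spurious sign when whole outer blocks are permuted, so that the surviving sign is purely the intra-row (resp.\ intra-column) one, and that this sign is precisely $(-1)^k$ --- harmless for $\phi_{k,n}$ and decisive for $\varphi_{n,k}$. Everything else --- well-definedness of $s$ and $s'$, naturality, and compatibility with base change --- is formal. As a consistency check matching the local-method discussion, when $\mathscr{F}$ is free of finite rank one may instead write both homomorphisms explicitly on the canonical bases of the symmetric and exterior powers, verify by hand that the formulas do not depend on the chosen basis of $\mathscr{F}$, and observe that this recovers the same morphisms as the global construction above.
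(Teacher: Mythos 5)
Your construction is correct and stays within the same circle of ideas as the paper's proof --- both realize $S^n_{\mathscr{O}_X}(S^k_{\mathscr{O}_X}\mathscr{F})$ and $S^k_{\mathscr{O}_X}(\wedge^n_{\mathscr{O}_X}\mathscr{F})$ as quotients of the $kn$-fold tensor power (written $\mathscr{F}^{\otimes(n+k)}$ in the paper, a typo for $\mathscr{F}^{\otimes kn}$) and obtain the maps from permutation sums --- but the packaging is genuinely different. The paper writes explicit endomorphisms of $\mathscr{F}^{\otimes kn}$ that keep the first block fixed and sum over $(n-1)$-tuples in $\mathfrak{S}_k$, resp.\ signed $(k-1)$-tuples in $\mathfrak{S}_n$, and then asserts the descent to both quotients; you instead lift along the canonical antisymmetrizer $s\colon\wedge^n\mathscr{F}\to\mathscr{F}^{\otimes n}$ and symmetrizer $s'\colon S^k\mathscr{F}\to\mathscr{F}^{\otimes k}$ and descend through the outer coinvariants. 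Your route makes the well-definedness checks mechanical and isolates exactly where ``$k$ even'' is used (the $(-1)^k$ under a column swap), which the paper leaves as ``not difficult to verify''; the price is a normalization factor ($k!$ for $\varphi_{n,k}$, $n!$ for $\phi_{k,n}$ with $k$ even) relative to the paper's formulas --- harmless for existence, but relevant to the composition constants computed later (e.g.\ $\varphi_{2,2}$, $\phi_{2,2}$ and the scalar $\tfrac{(k+1)!}{2}$).

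One caveat you should flag: for odd $k$ and $n\ge 2$ your $\phi_{k,n}$ is identically zero. Reindexing the outer symmetric product by the first permutation shows your fully symmetrized sum equals $\bigl(\sum_{\tau\in\mathfrak{S}_n}\mathrm{sgn}(\tau)^k\bigr)$ times the paper's partially symmetrized sum, and for odd $k$ this coefficient is $\sum_{\tau}\mathrm{sgn}(\tau)=0$. So your argument does prove the literal existence statement, but vacuously in the odd case. This is not a defect you could have removed by dropping the extra symmetrization: the paper's own $\tilde{\phi}_{k,n}$ is in fact not well defined for odd $k$ (for $k=3$, $n=2$ it sends the grid with first row $a\otimes a$, which dies in $S^3(\wedge^2\mathscr{F})$, to a nonzero element of $S^2(S^3\mathscr{F})$), and over $\mathbb{C}$ the plethysms for $S^3(\wedge^2 V)$ and $S^2(S^3 V)$ share no irreducible constituent, so the only natural map there is zero. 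Finally, your parenthetical that for odd $k$ the recipe yields a map out of $\wedge^n(S^k\mathscr{F})$ needs care: skew-invariance under transpositions only kills twice a tensor with a repeated factor, so that factorization is not automatic in characteristic $2$; but this aside is not needed for the theorem.
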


\begin{proof}
As~$S^k_{\mathscr{O}_X}(\wedge^n_{\mathscr{O}_X}\mathscr{F})$~and~$S^n_{\mathscr{O}_X}(S^k_{\mathscr{O}_X}\mathscr{F})$~are
all quotient sheaves of~$\mathscr{F}^{\otimes(n+k)}$, so in order to
define~$\varphi_{n,k}$~and~$\phi_{k,n}$, we only need to define
endomorphisms of~$\mathscr{F}^{\otimes(n+k)}$~such
that~$\varphi_{n,k}$~and~$\phi_{k,n}$~can be induced from these
endomorphisms.

Note~$\mathscr{F}^{\otimes(n+k)}$~admits a canonical permutation
group~$\mathfrak{S}_{n+k}$'s action, so in order to define global
homomorphisms, we could use these permutations.
Take~$\sigma_2,\cdots,$
$\sigma_n\in\mathfrak{S}_k$,
define~$\tilde{\varphi}_{n,k}$~as follows:
$$\begin{array}{ccc}
\mathscr{F}^{\otimes(n+k)}&\longrightarrow&\mathscr{F}^{\otimes(n+k)}\\
\begin{split}(a_{11}\otimes\cdots\otimes\,a_{1k})\otimes\cdots\otimes\\
(a_{n1}\otimes\cdots\otimes\,a_{nk})\end{split}&\mapsto&
\begin{split}\sum_{\sigma_2,\cdots,\sigma_n\in\mathfrak{S}_k}
(a_{11}\otimes\,a_{2\sigma_2(1)}\otimes\cdots\otimes\,a_{n\sigma_n(1)})\otimes\\
(a_{12}\otimes\,a_{2\sigma_2(2)}\otimes\cdots\otimes\,a_{n\sigma_n(2)})\otimes\cdots\\
\otimes(a_{1k}\otimes\,a_{2\sigma_2(k)}\otimes\cdots\otimes\,a_{n\sigma_n(k)})
\end{split}
\end{array}$$
Or equivalently, we may write this homomorphism
as$$\otimes_{i=1}^{n}(\otimes_{j=1}^ka_{ij})\mapsto
\sum\limits_{\sigma_2,\cdots,\sigma_n\in\mathfrak{S}_k}\otimes_{i=1}^k(a_{1i}\otimes(\otimes_{j=2}^na_{j\sigma_j(i)})).$$

It is not difficult to verify on ideal of definitions for these 2
tensor sheaves that~$\tilde{\varphi}_{n,k}$~induces a homomorphism
from~$S^n_{\mathscr{O}_X}(S^k_{\mathscr{O}_X}\mathscr{F})$~to~$S^k_{\mathscr{O}_X}(\wedge^n_{\mathscr{O}_X}\mathscr{F})$.
We denote it by~$\varphi_{n,k}$. If we write  this homomorphism in
local coordinates, it is
~$\otimes_{i=1}^{n}(\otimes_{j=1}^ka_{ij})\mapsto
\sum\limits_{\sigma_2,\cdots,\sigma_n\in\mathfrak{S}_k}\prod\limits_{i=1}^k(a_{1i}\wedge(\wedge_{j=2}^na_{j\sigma_j(i)}))$.

In order to define~$\phi_{k,n}$, we still use the
fact~$\mathscr{F}^{\otimes(n+k)}$~have permutation group's canonical
action. Take~$\tau_2,\cdots,\tau_k\in\mathfrak{S}_n$, we
define~$\tilde{\phi}_{k,n}$~as follows:
$$\begin{array}{ccc}
\mathscr{F}^{\otimes(n+k)}&\longrightarrow&\mathscr{F}^{\otimes(n+k)}\\
\begin{split}(a_{11}\otimes\cdots\otimes\,a_{1n})\otimes\cdots\otimes\\
(a_{k1}\otimes\cdots\otimes\,a_{kn})\end{split}&\mapsto&
\begin{split}\sum_{\tau_2,\cdots,\tau_k\in\mathfrak{S}_n}(-1)^{\tau(\tau_2\cdots\tau_k)}
(a_{11}\otimes\,a_{2\tau_2(1)}\otimes\cdots\otimes\,a_{k\tau_k(1)})\\
\otimes(a_{12}\otimes\,a_{2\tau_2(2)}\otimes\cdots\otimes\,a_{k\tau_k(2)})\otimes\\
\cdots\otimes(a_{1n}\otimes\,a_{2\tau_2(n)}\otimes\cdots\otimes\,a_{k\tau_k(n)})
\end{split}
\end{array}
$$
Or equivalently we may write this homomorphism as
$$\otimes_{i=1}^{k}(\otimes_{j=1}^na_{ij})\mapsto
\sum\limits_{\tau_2,\cdots,\tau_n\in\mathfrak{S}_n}(-1)^{\tau(\tau_2\cdots\tau_k)}
\otimes_{i=1}^n(a_{1i}\otimes(\otimes_{j=2}^ka_{j\sigma_j(i)})).$$

It is easy to verify that~$\tilde{\phi}_{k,n}$~induces a
homomorphism
from~$S^k_{\mathscr{O}_X}(\wedge^n_{\mathscr{O}_X}\mathscr{F})$~to
~$S^n_{\mathscr{O}_X}(S^k_{\mathscr{O}_X}\mathscr{F})$, which means
we have the following commutative diagram:
$$\xymatrix{
\mathscr{F}^{\otimes(k+n)}\ar[r]^{\tilde{\phi}_{k,n}}\ar[d]&\mathscr{F}^{\otimes(k+n)}\ar[d]\\
S^k_{\mathscr{O}_X}(\wedge^n_{\mathscr{O}_X}\mathscr{F})\ar@{.>}[r]&
S^n_{\mathscr{O}_X}(S^k_{\mathscr{O}_X}\mathscr{F})\\}$$ We denote
this homomorphism by~$\phi_{k,n}$. If we write this homomorphism in
local coordinates, it is~$\prod_{i=1}^k(\wedge_{j=1}^na_{ij})\mapsto
\sum\limits_{\tau_2,\cdots,\tau_k\in\mathfrak{S}_n}((-1)^{\tau_2\cdots\tau_k}\otimes_{i=1}^n(a_{1i}\otimes(\otimes_{j=2}^ka_{j\tau_j(i)})))$.

This finishes the proof.
\end{proof}

But unfortunately for most cases we can not get the composition
properties for~$\varphi_{n,k}\circ\phi_{k,n}$~and
$\phi_{k,n}\circ\varphi_{n,k}$, even when~$\mathscr{F}$~is locally
free. The main problem is although~$\varphi_{n,k}$~and~$\phi_{k,n}$
are globally well defined, it still has too many additional
combinational terms if we consider compositions for these
homomorphisms. A special case
is~$\varphi_{2,k}\circ\phi_{k,2}$~when~$\mathscr{F}$~is locally
free~$\mathscr{O}_X-$module of rank 2. In this case, we have
$$\begin{array}{c}
\phi_{k,2}((x\wedge\,y)^{k})=
\sum\limits_{i=1}^k((-1)^{k-i}\frac{(k-1)!}{(k-i)!(i-1)!}x^i\otimes\,y^{k-i}
\cdot\,x^{k-i}\otimes\,y^i).\\[3mm]
\varphi_{2,k}(x^i\otimes\,y^{k-i}\cdot\,x^{k-i}\otimes\,y^i)=(-1)^{k-i}(k-i)!\,i!(x\wedge\,y)^{k}.
\end{array}
$$
So the composition is a scalar multiplication
by~$\sum\limits_{i=1}^k(-1)^{k-i}\frac{(k-1)!}{(k-i)!(i-1)!}(-1)^{k-i}(k-i)!\,i!=
\sum\limits_{i=1}^ki\cdot(k-1)!=\frac{(k+1)!}{2}$. So we
have~$\varphi_{2,k}\circ\phi_{k,2}=
\frac{(k+1)!}{2}\cdot\,id_{S_{\mathscr{O}_X}^k(\wedge_{\mathscr{O}_X}^2\mathscr{F})}$.
We also compute the
case~$\varphi_{3,2}\circ\phi_{2,3}$~when~$\mathscr{F}$~is locally
free of rank 3, it is a scalar multiplication by 12, which is equal
to~$\frac{(2+3-1)!}{2}$. Hence we think the general case is still
true. Namely the
equality$$\varphi_{n,k}\circ\phi_{k,n}=\frac{(k+n-1)!}{2}\cdot\,
id_{S_{\mathscr{O}_X}^k(\wedge_{\mathscr{O}_X}^n\mathscr{F})}\
.$$if~$\mathscr{F}$~is locally free of rank~$n$.

Next we are going to use this theorem to get canonical homomorphisms
of $S^2_{\mathscr{O}_X}(S^n_{\mathscr{O}_X}\mathscr{F})$,
where~$\mathscr{F}$~is an~$\mathscr{O}_X-$module. Our aim to study
this kind of sheaf comes from \cite{FH}, as there exists a
decomposition of this kind of module. But in there, they deal with
representation theory, the base is $\mathbb{C}$, we want to get more
general result over general base~$X$~and the main idea is directed
by \cite{FH}.

First we will state the two steps handled in \cite{FH}
if~$\mathscr{F}$~is a locally free~$\mathscr{O}_X-$module of rank 2.
There exists a canonical homomorphism
from~$(\wedge_{\mathscr{O}_X}^2\mathscr{F})^{\otimes2}\otimes
S_{\mathscr{O}_X}^2(S_{\mathscr{O}_X}^{n-2}\mathscr{F})$ to
~$S_{\mathscr{O}_X}^2(S_{\mathscr{O}_X}^n\mathscr{F})$~which makes
the kernel for the upper canonical surjective homomorphism. If so we
could get the short exact
sequence:~$$0\rightarrow(\wedge_{\mathscr{O}_X}^2\mathscr{F})^{\otimes2}
\otimes\,S_{\mathscr{O}_X}^2(S_{\mathscr{O}_X}^{n-2}\mathscr{F})
\longrightarrow S_{\mathscr{O}_X}^2(S_{\mathscr{O}_X}^n\mathscr{F})
\longrightarrow S_{\mathscr{O}_X}^{2n}\mathscr{F}\rightarrow0.$$
Then the problem is whether we can construct a canonical
homomorphism from
$S_{\mathscr{O}_X}^2(S_{\mathscr{O}_X}^n\mathscr{F})$~to
$(\wedge_{\mathscr{O}_X}^2\mathscr{F})^{\otimes2}\otimes
S_{\mathscr{O}_X}^2(S_{\mathscr{O}_X}^{n-2}\mathscr{F})$ which forms
a canonical retraction for the upper exact sequence. Of course this
canonical retraction should be constructed from the following
commutative diagram:
$$\xymatrix{
\mathscr{F}^{\otimes2n}\ar[r]^{unknown}\ar[d]&\mathscr{F}^{\otimes2n}\ar[d]\\
S_{\mathscr{O}_X}^2(S_{\mathscr{O}_X}^n\mathscr{F})\ar@{.>}[r]
&(\wedge_{\mathscr{O}_X}^2\mathscr{F})^{\otimes2}\otimes\,S_{\mathscr{O}_X}^2
(S_{\mathscr{O}_X}^{n-2}\mathscr{F}).}$$ If these 2 steps are
correct, we may
write~$S_{\mathscr{O}_X}^2(S_{\mathscr{O}_X}^n\mathscr{F})$~as a
direct sum by induction. Hence we may decompose the locally free
sheaf~$S_{\mathscr{O}_X}^2(S_{\mathscr{O}_X}^{n-2}\mathscr{F})$.

But unfortunately what we have done now is not as above.  We can
construct global homomorphisms. The main problem is although these
homomorphisms are globally well defined, as before they have too
many combinational terms. Hence for composition they don't have good
properties, even when~$\mathscr{F}$~is locally
free~$\mathscr{O}_X-$module of rank 2.

Let~$n\geq3$~be an integer.
\begin{theorem}
There exists the following canonical homomorphisms
$$\xymatrix{S^2_{\mathscr{O}_X}(\wedge^2_{\mathscr{O}_X}\mathscr{F})
\otimes\,S^2_{\mathscr{O}_X}(S_{\mathscr{O}_X}^{n-2}\mathscr{F})\ar@{>}[r]^{\quad\qquad
i} &S^2_{\mathscr{O}_X}(S^n_{\mathscr{O}_X}\mathscr{F})\ar[r]^q
&S^{2n}_{\mathscr{O}_X}\mathscr{F}}$$ and
$$\xymatrix{S^2_{\mathscr{O}_X}(\wedge^2_{\mathscr{O}_X}\mathscr{F})
\otimes\,S^2_{\mathscr{O}_X}(S_{\mathscr{O}_X}^{n-2}\mathscr{F})
&S^2_{\mathscr{O}_X}(S^n_{\mathscr{O}_X}\mathscr{F})\ar[l]_{\qquad\quad
j} &S^{2n}_{\mathscr{O}_X}\mathscr{F}\ar[l]_{\varphi}}$$such
that~$q\circ\,i=0$~,~$q\circ\varphi$~is a scalar multiplication
of~$S^2_{\mathscr{O}_X}\mathscr{F}$~by~$\frac{(2n-1)\cdots(n+1)}{(n-1)!}$~.
\end{theorem}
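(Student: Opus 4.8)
Throughout write $S^{m}$, $\wedge^{m}$, $\otimes$ for $S^{m}_{\mathscr{O}_X}$, $\wedge^{m}_{\mathscr{O}_X}$, $\otimes_{\mathscr{O}_X}$. The plan is to realise all four sheaves in the statement as quotients of $\mathscr{F}^{\otimes 2n}$ and to define each of $i,q,j,\varphi$ by a sign--weighted permutation expression on $\mathscr{F}^{\otimes 2n}$, exactly as was done for $\varphi_{n,k}$ and $\phi_{k,n}$ above, checking in each case that the formula descends to the quotient source and, composed with the projection onto the quotient target, yields the claimed map. Recall that, as a quotient of $\mathscr{F}^{\otimes 2n}$, one has $S^{2n}\mathscr{F}=(\mathscr{F}^{\otimes 2n})_{\mathfrak{S}_{2n}}$ while $S^{2}(S^{n}\mathscr{F})=(\mathscr{F}^{\otimes 2n})_{H}$, where $H=(\mathfrak{S}_{n}\times\mathfrak{S}_{n})\rtimes\mathfrak{S}_{2}=\mathfrak{S}_{n}\wr\mathfrak{S}_{2}\subset\mathfrak{S}_{2n}$ is the stabiliser of the partition $\{1,\dots,n\}\sqcup\{n+1,\dots,2n\}$, the extra $\mathfrak{S}_{2}$ interchanging the two blocks.

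First I would take $q$ to be the homomorphism induced by $\operatorname{id}_{\mathscr{F}^{\otimes 2n}}$: it descends because the projection $\mathscr{F}^{\otimes 2n}\to S^{2n}\mathscr{F}$ already annihilates the defining relations of $S^{2}(S^{n}\mathscr{F})$, and it is just the multiplication $P_{1}\cdot P_{2}\mapsto P_{1}P_{2}$. For $\varphi$ I would use the transfer associated to $H\subset\mathfrak{S}_{2n}$: on a class $\bar w\in S^{2n}\mathscr{F}$ it is $\sum_{\sigma}\overline{\sigma w}$ (bar on the right denoting the class in $S^{2}(S^{n}\mathscr{F})$), the sum running over a set of representatives for the $N=[\mathfrak{S}_{2n}:H]=\dfrac{(2n)!}{2\,(n!)^{2}}$ cosets of $H$; this is well defined and $\mathfrak{S}_{2n}$--invariant, so it gives $\varphi\colon S^{2n}\mathscr{F}\to S^{2}(S^{n}\mathscr{F})$. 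Then $q\circ\varphi$ replaces each $\overline{\sigma w}$ by $\bar w$ again in $S^{2n}\mathscr{F}$, hence is multiplication by
\[
N=\frac{(2n)!}{2\,(n!)^{2}}=\frac{(2n-1)(2n-2)\cdots(n+1)}{(n-1)!},
\]
which is the asserted scalar (here $(2n-1)(2n-2)\cdots(n+1)=(2n-1)!/n!$). The one point requiring attention is to take cosets of the wreath product $H$, not of $\mathfrak{S}_{n}\times\mathfrak{S}_{n}$: this is exactly what yields the factor $\tfrac12$, i.e. $(n-1)!$ rather than $n!$ in the denominator.

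For $i$ I would contract twice against $\wedge^{2}\mathscr{F}$. Grouping the $2n$ slots as $\{1,2\}\sqcup\{3,4\}\sqcup\{5,\dots,n+2\}\sqcup\{n+3,\dots,2n\}$ (the two copies of $\wedge^{2}\mathscr{F}$ followed by the two copies of $S^{n-2}\mathscr{F}$), define on $\mathscr{F}^{\otimes 2n}$
\[
(a_{1}\otimes a_{2})\otimes(b_{1}\otimes b_{2})\otimes P\otimes Q\ \longmapsto\ \sum_{\sigma,\tau\in\mathfrak{S}_{2}}\operatorname{sgn}(\sigma)\operatorname{sgn}(\tau)\,(b_{\tau(1)}\otimes a_{\sigma(1)}\otimes P)\otimes(b_{\tau(2)}\otimes a_{\sigma(2)}\otimes Q),
\]
i.e. the composite of the natural map $\wedge^{2}\mathscr{F}\otimes S^{n-2}\mathscr{F}\otimes S^{n-2}\mathscr{F}\to S^{n-1}\mathscr{F}\otimes S^{n-1}\mathscr{F}$, $(a_{1}\wedge a_{2})\otimes P\otimes Q\mapsto a_{1}P\otimes a_{2}Q-a_{2}P\otimes a_{1}Q$, with its degree-$n$ analogue applied using the second $\wedge^{2}$--factor. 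One checks this is alternating in $a_{1},a_{2}$ and in $b_{1},b_{2}$, and -- using that the target is $S^{2}(S^{n}\mathscr{F})$ -- symmetric under interchanging the two $\wedge^{2}$--blocks and under $P\leftrightarrow Q$; hence it descends to $i\colon S^{2}(\wedge^{2}\mathscr{F})\otimes S^{2}(S^{n-2}\mathscr{F})\to S^{2}(S^{n}\mathscr{F})$. Composing with $q$ multiplies each summand out in $S^{2n}\mathscr{F}$, where it becomes $a_{1}a_{2}b_{1}b_{2}PQ$; since $\sum_{\sigma}\operatorname{sgn}(\sigma)=\sum_{\tau}\operatorname{sgn}(\tau)=0$, this gives $q\circ i=0$.

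Finally, for $j$ I would dualise the construction of $i$: grouping the slots as the two copies of $S^{n}\mathscr{F}$, apply twice the natural polarisation $S^{a}\mathscr{F}\otimes S^{b}\mathscr{F}\to\wedge^{2}\mathscr{F}\otimes S^{a-1}\mathscr{F}\otimes S^{b-1}\mathscr{F}$, $p_{1}\cdots p_{a}\otimes q_{1}\cdots q_{b}\mapsto\sum_{i,k}(p_{i}\wedge q_{k})\otimes\hat P_{i}\otimes\hat Q_{k}$, obtaining $P\otimes Q\mapsto\sum(p_{i}\wedge q_{k})\otimes(p_{i'}\wedge q_{k'})\otimes\hat P_{i,i'}\otimes\hat Q_{k,k'}$, and then project onto $S^{2}(\wedge^{2}\mathscr{F})\otimes S^{2}(S^{n-2}\mathscr{F})$. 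This is symmetric under $P\leftrightarrow Q$ (the two sign changes $p\wedge q\mapsto q\wedge p$ cancel and the residual transpositions are absorbed by the two symmetric squares), so it descends to $j$. No composition property of $j$ is asserted, and indeed, as remarked before the statement, $i\circ j$ and $j\circ i$ carry too many combinatorial terms to be scalars in general. I expect the main work to be the routine but bookkeeping-heavy verification that the permutation--sign formulas defining $i$ and $j$ respect all defining relations of the symmetric and exterior powers involved (equivalently, are constant on the relevant cosets); granting that, $q\circ i=0$ is a one-line cancellation, and the value of $q\circ\varphi$ is just the index $[\mathfrak{S}_{2n}:\mathfrak{S}_{n}\wr\mathfrak{S}_{2}]=\dfrac{(2n-1)(2n-2)\cdots(n+1)}{(n-1)!}$ on $S^{2n}\mathscr{F}$.
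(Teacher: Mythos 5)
Your proposal is correct and follows essentially the paper's own route: every map is induced by a permutation expression on $\mathscr{F}^{\otimes 2n}$, your transfer over cosets of $\mathfrak{S}_n\wr\mathfrak{S}_2$ is exactly the paper's sum over $n$-element subsets $N$ with $1\in N$ (same scalar $\binom{2n-1}{n-1}=\frac{(2n)!}{2(n!)^2}$), and $q\circ i=0$ is the same sign-cancellation argument. The only cosmetic difference is that the paper builds $i$ and $j$ as composites $f\circ(\phi_{2,2}\otimes\mathrm{id})$ and $(\varphi_{2,2}\otimes\mathrm{id})\circ g$ through $S^2_{\mathscr{O}_X}(S^2_{\mathscr{O}_X}\mathscr{F})\otimes S^2_{\mathscr{O}_X}(S^{n-2}_{\mathscr{O}_X}\mathscr{F})$, whereas you write the resulting (up to sign and a harmless scalar, identical) formulas directly.
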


\begin{proof}
It is easy to see there exists a canonical surjective
homomorphism~$$S_{\mathscr{O}_X}^2(S_{\mathscr{O}_X}^n\mathscr{F})
\longrightarrow\,S_{\mathscr{O}_X}^{2n}\mathscr{F}\rightarrow0.$$which
is induced by the identity map
on~$\mathscr{F}^{\otimes2n}\rightarrow\mathscr{F}^{\otimes2n}$. We
denote this quotient homomorphism by~$q$.

Next set~$N$~be a subset of the set~$\{1,2,\cdots,2n\}$~such
that~$1\in\,N$~and~$|N|=n$. Define a
homomorphism~$\tilde{\varphi}$~as follows:
$$\begin{array}{cccc}
\tilde{\varphi}:&\mathscr{F}^{\otimes(2n)}&\longrightarrow&\mathscr{F}^{\otimes(2n)}\\
&a_1\otimes\cdots\otimes\,a_{2n}&\mapsto&
\sum\limits_N((\otimes_{i\in\,N}a_i)\otimes(\otimes_{j\bar{\in}N}a_j))\ .\\
\end{array}$$
It is not difficult to check that~$\tilde{\varphi}$~induces a
homomorphism
from~$S_{\mathscr{O}_X}^{2n}\mathscr{F}$~to~$S_{\mathscr{O}_X}^2(S_{\mathscr{O}_X}^n\mathscr{F})$.
Namely we have the following commutative diagram:
$$\xymatrix{\mathscr{F}^{\otimes(2n)\ar[r]^{\tilde{\varphi}}}\ar[d]&\mathscr{F}^{\otimes(2n)}\ar[d]\\
S_{\mathscr{O}_X}^{2n}\mathscr{F}\ar@{.>}[r]&S_{\mathscr{O}_X}^2(S_{\mathscr{O}_X}^n\mathscr{F})\\}$$
We denote this homomorphism by~$\varphi$.

For~$q\circ\varphi$, note that for a general
term~$\alpha\in\,S_{\mathscr{O}_X}^{2n}\mathscr{F}$,
$\varphi(\alpha)$~is just proper permutations of coordinates
of~$\alpha$, it does not have any other terms, so if mapped
into~$S_{\mathscr{O}_X}^{2n}\mathscr{F}$~again, they are all the
same element. Hence we only need to compute the multiplication.
According to the definition of the subset~$N$, we
have~$q\circ\varphi=\frac{(2n-1)\cdots(n+1)}{(n-1)!}\cdot\,id_{S_{\mathscr{O}_X}^{2n}\mathscr{F}}$.

Next we will define~$i$~and~$j$. First note that according to the
theorem before, we have canonical homomorphisms
$$\begin{array}{cccc}
\phi_{2,2}:&S^2_{\mathscr{O}_X}(\wedge^2_{\mathscr{O}_X}\mathscr{F})&\longrightarrow
&S^2_{\mathscr{O}_X}(S^2_{\mathscr{O}_X}\mathscr{F})\\
&(a_{11}\wedge\,a_{12})\cdot(a_{21}\wedge\,a_{22})&\mapsto&
\begin{split}(a_{11}\otimes\,a_{21})\cdot(a_{12}\otimes\,a_{22})-\\
(a_{11}\otimes\,a_{22})\cdot(a_{12}\otimes\,a_{21})
\end{split}\ .
\end{array}$$
and
$$\begin{array}{cccc}
\varphi_{2,2}:&S^2_{\mathscr{O}_X}(S^2_{\mathscr{O}_X}\mathscr{F})&\longrightarrow
&S^2_{\mathscr{O}_X}(\wedge^2_{\mathscr{O}_X}\mathscr{F})\\
&(a_{11}\otimes\,a_{12})\cdot(a_{21}\otimes\,a_{22})&\mapsto
&\begin{split}
(a_{11}\wedge\,a_{21})\cdot(a_{12}\wedge\,a_{22})+\\
(a_{11}\wedge\,a_{22})\cdot(a_{12}\wedge\,a_{21})
\end{split}\ .
\end{array}$$
Where~$a_{ij}$~are local coordinates of~$\mathscr{F}$~for~$i,j=1,2$.

So in order to define~$i$~and~$j$, we only need to define
homomorphisms$$f:\quad
S^2_{\mathscr{O}_X}(S^2_{\mathscr{O}_X}\mathscr{F})
\otimes\,S^2_{\mathscr{O}_X}(S_{\mathscr{O}_X}^{n-2}\mathscr{F})\rightarrow
S^2_{\mathscr{O}_X}(S^n_{\mathscr{O}_X}\mathscr{F}).$$
and$$g:\quad
S^2_{\mathscr{O}_X}(S^n_{\mathscr{O}_X}\mathscr{F})\rightarrow\,S^2_{\mathscr{O}_X}(S^2_{\mathscr{O}_X}\mathscr{F})
\otimes\,S^2_{\mathscr{O}_X}(S_{\mathscr{O}_X}^{n-2}\mathscr{F}).$$
set~$i=f\circ(\phi_{2,2}\otimes\,id_{S^2_{\mathscr{O}_X}(S_{\mathscr{O}_X}^{n-2}\mathscr{F})})$~and
$j=(\varphi_{2,2}\otimes\,id_{S^2_{\mathscr{O}_X}(S_{\mathscr{O}_X}^{n-2}\mathscr{F})})\circ\,g$.
In order to check the composition properties, we just need to check
it on local generators.

Define~$\tilde{f}$~and~$\tilde{g}$~as follows:
$$\begin{array}{cccc}
\tilde{f}:&\mathscr{F}^{\otimes(2n)}&\longrightarrow&\mathscr{F}^{\otimes(2n)}\\
&\begin{split}
(t_1\otimes\,t_2\otimes\,t_3\otimes\,t_4)\otimes((a_1\otimes\cdots
\\\otimes\,a_{n-2})\cdot(b_1\otimes\cdots\otimes\,b_{n-2}))\end{split}
&\mapsto&\begin{split}
(t_1t_3a_1\cdots\,a_{n-2})\otimes(t_2t_4b_1\cdots\,b_{n-2})\\
+(t_1t_4a_1\cdots\,a_{n-2})\otimes(t_2t_3b_1\cdots\,b_{n-2})\\
+(t_2t_4a_1\cdots\,a_{n-2})\otimes(t_1t_3b_1\cdots\,b_{n-2})\\
+(t_2t_3a_1\cdots\,a_{n-2})\otimes(t_1t_4b_1\cdots\,b_{n-2})
\end{split}\end{array}$$
$$\begin{array}{cccc}
\tilde{g}:&\mathscr{F}^{\otimes(2n)}&\longrightarrow&\mathscr{F}^{\otimes(2n)}\\
&\begin{split}(a_1\cdots\,a_n)\otimes\\
(b_1\cdots\,b_n)\end{split}&\mapsto&
\begin{split}\sum_{i<j}\sum_{k<l}(a_i\otimes\,b_k\cdot\,a_j\otimes\,b_l+
a_i\otimes\,b_l\cdot\,a_j\otimes\,b_k)\cdot\\
(a_1\cdots\widehat{a_i}\cdots\widehat{a_j}\cdots\,a_n)\otimes(b_1\cdots\widehat{b_k}\cdots\widehat{b_l}\cdots\,b_n)
\end{split}\end{array}$$
It is not difficult to check on ideal of definitions
that~$\tilde{f}$~and~$\tilde{g}$~induce homomorphisms~$f$~and~$g$.
In order to check the computation properties we write~$i$~and~$j$~in
local coordinates. We have
$$\begin{array}{l}
\quad
i((x_1\wedge\,y_1\cdot\,x_2\wedge\,y_2)\cdot(a_1\otimes\cdots\otimes\,a_{n-2}\cdot\,b_1\otimes\cdots\otimes\,b_{n-2}))\\[1mm]
=(x_1y_2a_1\cdots\,a_{n-2})\otimes(x_2y_1b_1\cdots\,b_{n-2})+(x_2y_1a_1\cdots\,a_{n-2})\otimes(x_1y_2b_1\cdots\,b_{n-2})-\\[1mm]
\quad(x_1x_2a_1\cdots\,a_{n-2})\otimes(y_1y_2b_1\cdots\,b_{n-2})-(y_1y_2a_1\cdots\,a_{n-2})\otimes(x_1x_2b_1\cdots\,b_{n-2})
\end{array}$$
And
$$\begin{array}{l}
\quad j\,((a_1\otimes\cdots\otimes\,a_n)\cdot(b_1\otimes\cdots\otimes\,b_n))\\[1mm]
=\sum\limits_{i<j}\,\sum\limits_{k<l}-(a_i\wedge\,b_l\cdot\,a_j\wedge\,b_k+
a_i\wedge\,b_k\cdot\,a_j\otimes\,b_l)\cdot\\[1mm]
\quad(a_1\cdots\widehat{a_i}\cdots\widehat{a_j}\cdots\,a_n)\cdot(b_1\cdots\widehat{b_k}\cdots\widehat{b_l}\cdots\,b_n)
\end{array}$$
Hence we define all the homomorphisms. It is not difficult to see
that for a general
element~$\alpha\in\,S^2_{\mathscr{O}_X}(\wedge^2_{\mathscr{O}_X}\mathscr{F})
\otimes\,S^2_{\mathscr{O}_X}(S_{\mathscr{O}_X}^{n-2}\mathscr{F})$,~$i(\alpha)$~is
just permutations of coordinates of~$\alpha$,when mapped
into~$S_{\mathscr{O}_X}^{2n}\mathscr{F}$, they are the same element.
Also note that these terms have different signs,
hence~$q\circ\,i=0$.

This finishes the proof.
\end{proof}

\section{Canonical Decompositions}

In this final section, we are going to give 2 concrete examples of
the theorems in section2.

First we are going to prove the following lemma, which is an
exercise in \cite{H}, which generalized the exercise in\cite{L2}.

\begin{lemma}
Let ~$(X,\mathscr{O}_X)$~be a scheme,
suppose~$0\rightarrow\mathscr{F}'\rightarrow\mathscr{F}\rightarrow\mathscr{F}''\rightarrow0$~is
an~$\mathscr{O}_X-$module's exact
sequence,~$\mathscr{F}'$,~$\mathscr{F}$~and~$\mathscr{F}''$~are
locally free of rank~$m$,~$m+n$~and~$n$~. Then we
have\begin{equation}
\wedge_{\mathscr{O}_X}^{m+n}\mathscr{F}\cong\wedge_{\mathscr{O}_X}^m\mathscr{F}'
\otimes_{\mathscr{O}_X}\wedge_{\mathscr{O}_X}^n\mathscr{F}''.
\end{equation}
\end{lemma}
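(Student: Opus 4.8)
The plan is to construct a single canonical $\mathscr{O}_X$-linear map
$$\psi:\ \wedge_{\mathscr{O}_X}^m\mathscr{F}'\otimes_{\mathscr{O}_X}\wedge_{\mathscr{O}_X}^n\mathscr{F}''\longrightarrow\wedge_{\mathscr{O}_X}^{m+n}\mathscr{F},$$
and then to show it is an isomorphism by checking the statement locally, where the given sequence splits. Write $\iota:\mathscr{F}'\hookrightarrow\mathscr{F}$ and $\pi:\mathscr{F}\twoheadrightarrow\mathscr{F}''$. First I would consider the obvious $\mathscr{O}_X$-bilinear pairing $\wedge_{\mathscr{O}_X}^m\mathscr{F}'\times\wedge_{\mathscr{O}_X}^n\mathscr{F}\to\wedge_{\mathscr{O}_X}^{m+n}\mathscr{F}$, $(\omega,\eta)\mapsto(\wedge^m\iota)(\omega)\wedge\eta$, which yields an $\mathscr{O}_X$-linear map $\wedge_{\mathscr{O}_X}^m\mathscr{F}'\otimes_{\mathscr{O}_X}\wedge_{\mathscr{O}_X}^n\mathscr{F}\to\wedge_{\mathscr{O}_X}^{m+n}\mathscr{F}$; the task is to descend it along $\pi$.

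Next I would show this map kills $\wedge_{\mathscr{O}_X}^m\mathscr{F}'\otimes\mathscr{K}$, where $\mathscr{K}\subset\wedge_{\mathscr{O}_X}^n\mathscr{F}$ is the image of $\mathscr{F}'\otimes_{\mathscr{O}_X}\wedge_{\mathscr{O}_X}^{n-1}\mathscr{F}\to\wedge_{\mathscr{O}_X}^n\mathscr{F}$, $f'\otimes\theta\mapsto\iota(f')\wedge\theta$. This is a local check: over an open $U$ on which $\mathscr{F}'$ is free with basis $e_1',\dots,e_m'$, the invertible sheaf $\wedge_{\mathscr{O}_X}^m\mathscr{F}'$ is generated by $e_1'\wedge\cdots\wedge e_m'$, and $(e_1'\wedge\cdots\wedge e_m')\wedge\iota(f')\wedge\theta=0$ for every $f'\in\mathscr{F}'(U)$ since $\iota(f')$ lies in the $\mathscr{O}_X(U)$-span of $\iota(e_1'),\dots,\iota(e_m')$. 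Since the cokernel of $\mathscr{F}'\otimes_{\mathscr{O}_X}\wedge_{\mathscr{O}_X}^{n-1}\mathscr{F}\to\wedge_{\mathscr{O}_X}^n\mathscr{F}$ is canonically $\wedge_{\mathscr{O}_X}^n\mathscr{F}''$ (the standard right-exactness of exterior powers applied to the surjection $\pi$, again a purely local statement), and since tensoring with $\wedge_{\mathscr{O}_X}^m\mathscr{F}'$ is right exact, the pairing factors through $\wedge_{\mathscr{O}_X}^m\mathscr{F}'\otimes_{\mathscr{O}_X}(\wedge_{\mathscr{O}_X}^n\mathscr{F}/\mathscr{K})=\wedge_{\mathscr{O}_X}^m\mathscr{F}'\otimes_{\mathscr{O}_X}\wedge_{\mathscr{O}_X}^n\mathscr{F}''$, giving the canonical map $\psi$. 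Because $\psi$ is built without any choices, there is no gluing issue.

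Finally I would verify $\psi$ is an isomorphism. Both source and target are invertible $\mathscr{O}_X$-modules (top exterior powers of locally free sheaves of ranks $m$, $n$, $m+n$ respectively), so it suffices to check $\psi$ carries a local generator to a local generator. Shrinking $X$ so that the sequence splits over $U$, choose a basis $e_1',\dots,e_m'$ of $\mathscr{F}'(U)$ and lifts $\tilde e_1,\dots,\tilde e_n\in\mathscr{F}(U)$ of a basis $\bar e_1,\dots,\bar e_n$ of $\mathscr{F}''(U)$; then $\iota(e_1'),\dots,\iota(e_m'),\tilde e_1,\dots,\tilde e_n$ is a basis of $\mathscr{F}(U)$, so $\iota(e_1')\wedge\cdots\wedge\iota(e_m')\wedge\tilde e_1\wedge\cdots\wedge\tilde e_n$ generates $\wedge_{\mathscr{O}_X}^{m+n}\mathscr{F}(U)$, and this is exactly $\psi\bigl((e_1'\wedge\cdots\wedge e_m')\otimes(\bar e_1\wedge\cdots\wedge\bar e_n)\bigr)$. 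Hence $\psi$ is an isomorphism on each such $U$, therefore globally. I expect the only mild obstacle to be phrasing the descent fact $\mathrm{coker}(\mathscr{F}'\otimes_{\mathscr{O}_X}\wedge_{\mathscr{O}_X}^{n-1}\mathscr{F}\to\wedge_{\mathscr{O}_X}^n\mathscr{F})\cong\wedge_{\mathscr{O}_X}^n\mathscr{F}''$ cleanly; an alternative, somewhat heavier route is to invoke the standard filtration of $\wedge_{\mathscr{O}_X}^r\mathscr{F}$ associated to the extension, whose graded pieces are $\wedge_{\mathscr{O}_X}^p\mathscr{F}'\otimes_{\mathscr{O}_X}\wedge_{\mathscr{O}_X}^{r-p}\mathscr{F}''$, and to note that for $r=m+n$ every piece vanishes except $p=m$ by the rank bounds $\wedge_{\mathscr{O}_X}^{>m}\mathscr{F}'=0$ and $\wedge_{\mathscr{O}_X}^{>n}\mathscr{F}''=0$. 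This identification is the rank-one shadow of multiplicativity of the determinant in short exact sequences.
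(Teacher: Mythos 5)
Your proposal is correct, and while it uses the same raw materials as the paper's proof, the logical route is genuinely different. Both arguments start from the two canonical maps out of $\wedge_{\mathscr{O}_X}^m\mathscr{F}'\otimes_{\mathscr{O}_X}\wedge_{\mathscr{O}_X}^n\mathscr{F}$: the map $\alpha$ obtained by tensoring the surjection $\wedge_{\mathscr{O}_X}^n\mathscr{F}\to\wedge_{\mathscr{O}_X}^n\mathscr{F}''$ with $\wedge_{\mathscr{O}_X}^m\mathscr{F}'$, and the wedge-multiplication map $\beta$ into $\wedge_{\mathscr{O}_X}^{m+n}\mathscr{F}$. You prove $\mathrm{Ker}(\alpha)\subseteq\mathrm{Ker}(\beta)$ (that $\beta$ kills $\wedge_{\mathscr{O}_X}^m\mathscr{F}'\otimes\mathscr{K}$), descend $\beta$ to a canonical morphism $\psi:\wedge_{\mathscr{O}_X}^m\mathscr{F}'\otimes_{\mathscr{O}_X}\wedge_{\mathscr{O}_X}^n\mathscr{F}''\to\wedge_{\mathscr{O}_X}^{m+n}\mathscr{F}$, and verify it is an isomorphism by a local-splitting computation sending a generator to a generator. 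The paper argues in the opposite direction: it claims $\mathrm{Ker}(\beta)\subseteq\mathrm{Ker}(\alpha)$, applies the snake lemma to obtain a surjection from $(\wedge_{\mathscr{O}_X}^m\mathscr{F}'\otimes_{\mathscr{O}_X}\wedge_{\mathscr{O}_X}^n\mathscr{F})/\mathrm{Ker}(\beta)$ onto $\wedge_{\mathscr{O}_X}^m\mathscr{F}'\otimes_{\mathscr{O}_X}\wedge_{\mathscr{O}_X}^n\mathscr{F}''$, identifies that quotient with a subsheaf of the invertible sheaf $\wedge_{\mathscr{O}_X}^{m+n}\mathscr{F}$, and concludes by a rank argument, never choosing local splittings or bases. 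What your route buys is an explicit canonical map with a transparent final check (once invertibility of source and target is noted, generator-to-generator suffices), and it avoids the paper's more delicate step that a quotient which surjects onto an invertible sheaf and embeds in an invertible sheaf must be everything; the price is that you need the standard presentation $\mathrm{coker}(\mathscr{F}'\otimes_{\mathscr{O}_X}\wedge_{\mathscr{O}_X}^{n-1}\mathscr{F}\to\wedge_{\mathscr{O}_X}^n\mathscr{F})\cong\wedge_{\mathscr{O}_X}^n\mathscr{F}''$ and local liftability of a basis of $\mathscr{F}''$, both of which you correctly reduce to local checks, so there is no gap.
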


\begin{proof}
It is easy to see
that~$\wedge_{\mathscr{O}_X}^{m+n}\mathscr{F}$~and~$\wedge_{\mathscr{O}_X}^m\mathscr{F}'
\otimes_{\mathscr{O}_X}\wedge_{\mathscr{O}_X}^n\mathscr{F}''$~are
both locally free~$R-$modules of rank 1. Use the functorial property
of~$\wedge$~and the exact sequence
of~$0\rightarrow\mathscr{F}'\rightarrow\mathscr{F}\rightarrow\mathscr{F}''\rightarrow0$~we
may get canonical surjective
homomorphism~$\wedge_{\mathscr{O}_X}^n\mathscr{F}\rightarrow\wedge_{\mathscr{O}_X}^n
\mathscr{F}''\rightarrow0$~,~$\wedge_{\mathscr{O}_X}^m\mathscr{F}'\otimes_{\mathscr{O}_X}-$~for
the upper sequence and taking the kernel we get exact sequence
\begin{equation}
0\rightarrow\mbox{Ker}(\alpha)\rightarrow\wedge_{\mathscr{O}_X}^m\mathscr{F}'
\otimes_{\mathscr{O}_X}\wedge_{\mathscr{O}_X}^n\mathscr{F}
\xrightarrow{\alpha}\wedge_{\mathscr{O}_X}^m\mathscr{F}'
\otimes_{\mathscr{O}_X}\wedge_{\mathscr{O}_X}^n\mathscr{F}''\rightarrow0.
\end{equation}
From the definition it is easy to see that~$\mbox{Ker}(\alpha)$~is a
subsheaf of~$\wedge_{\mathscr{O}_X}^m\mathscr{F}'
\otimes_{\mathscr{O}_X}\wedge_{\mathscr{O}_X}^n\mathscr{F}$~which
are locally generated by sections of the
form~$(a_1'\wedge\cdots\wedge\,a_m')\otimes(a_1\wedge\cdots\wedge\,a_n)$~with~$a_i\in\,M'$~for
some~$i=1,\cdots,n$. Also note that there exists a canonical
homomorphism from
~$\wedge_{\mathscr{O}_X}^m\mathscr{F}\otimes_{\mathscr{O}_X}
\wedge_{\mathscr{O}_X}^n\mathscr{F}\rightarrow
\wedge_{\mathscr{O}_X}^{m+n}\mathscr{F}$~by
mapping~$(a_1'\wedge\cdots\wedge\,a_m',a_1\wedge\cdots\wedge\,a_n)$~to~$a_1'\wedge\cdots
\wedge\,a_m'\wedge\,a_1\wedge\cdots\wedge\,a_n$, in fact one only
need to compute there exists the following commutative diagram:
$$\xymatrix{ \mathscr{F}^{\otimes_{\mathscr{O}_X}(m+n)}
\ar[r]^{id}\ar[d]&\mathscr{F}^{\otimes_{\mathscr{O}_X}(m+n)}\ar[d]\\
\wedge_{\mathscr{O}_X}^m\mathscr{F}\otimes\wedge_{\mathscr{O}_X}^n\mathscr{F}
\ar@{.>}[r]&\wedge_{\mathscr{O}_X}^{m+n}\mathscr{F}.}$$ Also we have
a canonical
homomorphism~$\wedge_{\mathscr{O}_X}^m\mathscr{F}'\otimes_{\mathscr{O}_X}
\wedge_{\mathscr{O}_X}^n\mathscr{F}\rightarrow\wedge_{\mathscr{O}_X}^m\mathscr{F}
\otimes_{\mathscr{O}_X}\wedge_{\mathscr{O}_X}^n\mathscr{F}$~. Taking
composition we
get~$$\wedge_{\mathscr{O}_X}^m\mathscr{F}'\otimes_{\mathscr{O}_X}
\wedge_{\mathscr{O}_X}^n\mathscr{F}\rightarrow\wedge_{\mathscr{O}_X}^{m+n}\mathscr{F}.$$Denote
this morphism by~$\beta$~. Taking the kernel we have
\begin{equation}
0\rightarrow\mbox{Ker}(\beta)\rightarrow
\wedge_{\mathscr{O}_X}^m\mathscr{F}'\otimes_{\mathscr{O}_X}
\wedge_{\mathscr{O}_X}^n\mathscr{F}\rightarrow
\wedge_{\mathscr{O}_X}^{m+n}\mathscr{F}.\end{equation}From the
construction it is easy to see that~$\mbox{Ker}(\beta)$~is subsheaf
of~$\wedge_{\mathscr{O}_X}^m\mathscr{F}'\otimes_{\mathscr{O}_X}
\wedge_{\mathscr{O}_X}^n\mathscr{F}$~locally generated by sections
of the
form~$(a_1'\wedge\cdots\wedge\,a_m')\otimes(a_1\wedge\cdots\wedge\,a_n)$~with
some~$a_i'=a_j$~for some pair~$i\,,j$~. Then it is easy to see
that~$\mbox{Ker}(\beta)$~is a submodule of~$\mbox{Ker}(\alpha)$~. So
we may get the following commutative diagram
$$\begin{array}{ccccccccc}
0&\rightarrow&\mbox{Ker}(\beta)&\rightarrow&
\wedge_{\mathscr{O}_X}^m\mathscr{F}'\otimes_{\mathscr{O}_X}
\wedge_{\mathscr{O}_X}^n\mathscr{F}&\xrightarrow{\beta}&(\wedge_{\mathscr{O}_X}^m\mathscr{F}'\otimes_{\mathscr{O}_X}
\wedge_{\mathscr{O}_X}^n\mathscr{F})/\mbox{Ker}(\beta)&\rightarrow&0\\
&&\downarrow&&\downarrow&&\downarrow&&\\
0&\rightarrow&\mbox{Ker}(\alpha)&\rightarrow&\wedge_{\mathscr{O}_X}^m\mathscr{F}'
\otimes_{\mathscr{O}_X}\wedge_{\mathscr{O}_X}^n\mathscr{F}
&\xrightarrow{\alpha}&\wedge_{\mathscr{O}_X}^m\mathscr{F}'
\otimes_{\mathscr{O}_X}\wedge_{\mathscr{O}_X}^n\mathscr{F}''&\rightarrow&0\\
\end{array}$$According
the snake lemma we get a canonical surjective homomorphism from
~$(\wedge_{\mathscr{O}_X}^m\mathscr{F}'\otimes_{\mathscr{O}_X}
\wedge_{\mathscr{O}_X}^n\mathscr{F})/\mbox{Ker}(\beta)$ to
$\wedge_{\mathscr{O}_X}^m\mathscr{F}'
\otimes_{\mathscr{O}_X}\wedge_{\mathscr{O}_X}^n\mathscr{F}''$~.
 So~$(\wedge_{\mathscr{O}_X}^m\mathscr{F}'\otimes_{\mathscr{O}_X}
\wedge_{\mathscr{O}_X}^n\mathscr{F})/\mbox{Ker}(\beta)$~is locally
free~$\mathscr{O}_X-$module of rank~$\geq1$. But in
fact~$(\wedge_{\mathscr{O}_X}^m\mathscr{F}'\otimes_{\mathscr{O}_X}
\wedge_{\mathscr{O}_X}^n\mathscr{F})/\mbox{Ker}(\beta)$~is a
submodule of~$\wedge^{m+n}_{\mathscr{O}_X}\mathscr{F}$~, which
is~$\mathscr{O}_X-$module locally free of rank 1. Thus we
have~$(\wedge_{\mathscr{O}_X}^m\mathscr{F}'\otimes_{\mathscr{O}_X}
\wedge_{\mathscr{O}_X}^n\mathscr{F})/\mbox{Ker}(\beta)=\wedge^{m+n}_{\mathscr{O}_X}\mathscr{F}$~and
a surjective homomorphism from the former to the latter. Thus this
is an isomorphism. This finishes the proof.
\end{proof}

\textbf{Remark:}In here we cannot use the formula of Theorem1.17. As
in here~$\mathscr{F}'$,
$\mathscr{F}$~and~$\mathscr{F}''$~are %
locally free~$\mathscr{O}_X-$modules, so there doesn't exist a
canonical split exact sequence.

Consider the special case where~$\mathscr{F}$~is a locally
free~$\mathscr{O}_X$-module of rank 2,we have the following theorem.
\begin{theorem}
There exists a canonical exact
sequence:~$$0\longrightarrow\,S_{\mathscr{O}_X}^2(\wedge_{\mathscr{O}_X}^2\mathscr{F})
\longrightarrow\,S_{\mathscr{O}_X}^2
(S_{\mathscr{O}_X}^2\mathscr{F})\longrightarrow\,S_{\mathscr{O}_X}^4\mathscr{F}\longrightarrow0.$$
There also admits a canonical homomorphism~$\tau:S_{\mathscr{O}_X}^2
(S_{\mathscr{O}_X}^2\mathscr{F})\longrightarrow\,S_{\mathscr{O}_X}^2(\wedge_{\mathscr{O}_X}^2\mathscr{F})$,
composite~$\tau$~with the canonical inclusion, it it multiplication
by~$3$. So if~$X$~is an~$\mbox{Spec}(\mathbb{Z}[\frac{1}{3}])$
scheme, which means if 3 is an invertible element
in~$\mathscr{O}_X$, the canonical exact sequence splits, hence we
have canonical direct
sum$$S_{\mathscr{O}_X}^2(S_{\mathscr{O}_X}^2\mathscr{F})\cong
S_{\mathscr{O}_X}^2(\wedge_{\mathscr{O}_X}^2\mathscr{F})
\oplus\,S_{\mathscr{O}_X}^4\mathscr{F}.$$
\end{theorem}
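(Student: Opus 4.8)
In contrast to the general $n\ge 3$ situation, when $n=2$ the auxiliary factor $S^2_{\mathscr{O}_X}(S^{n-2}_{\mathscr{O}_X}\mathscr{F})$ degenerates to $\mathscr{O}_X$, the three sheaves involved are small (local ranks $1$, $6$, $5$, with $1+5=6$), and one can push the computation all the way to a genuine split exact sequence. The plan is to produce $q$, $i$, $\tau$ as honest global homomorphisms built from the canonical maps already available, and then to verify the three required properties --- $q\circ i=0$, exactness, and $\tau\circ i=3\cdot\mathrm{id}$ --- by one local computation over an affine open on which $\mathscr{F}$ is free of rank $2$, using that everything in sight is canonical. Concretely: take $q\colon S^2_{\mathscr{O}_X}(S^2_{\mathscr{O}_X}\mathscr{F})\to S^4_{\mathscr{O}_X}\mathscr{F}$ to be the canonical epimorphism induced by $\mathrm{id}_{\mathscr{F}^{\otimes 4}}$ (the $n=2$ instance of the map $q$ of the previous section), and take $i:=\phi_{2,2}\colon S^2_{\mathscr{O}_X}(\wedge^2_{\mathscr{O}_X}\mathscr{F})\to S^2_{\mathscr{O}_X}(S^2_{\mathscr{O}_X}\mathscr{F})$ and $\tau:=\varphi_{2,2}\colon S^2_{\mathscr{O}_X}(S^2_{\mathscr{O}_X}\mathscr{F})\to S^2_{\mathscr{O}_X}(\wedge^2_{\mathscr{O}_X}\mathscr{F})$ to be the $n=k=2$ instances of the canonical homomorphisms constructed above, whose local formulas were recorded there. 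Since $q$, $i$, $\tau$ are canonical, it is enough to check the three properties after restricting to an affine open $U=\mathrm{Spec}(R)$ over which $\mathscr{F}|_U$ is free of rank $2$; the assertions then glue to all of $X$.

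Fix such a $U$, a basis $x,y$ of $\mathscr{F}(U)$, and write $u=x^2$, $v=xy$, $w=y^2$ for the induced basis of $S^2(\mathscr{F}(U))$, so that $S^2_{\mathscr{O}_X}(S^2_{\mathscr{O}_X}\mathscr{F})|_U$ is free of rank $6$ on $u^2,uv,uw,v^2,vw,w^2$ and $S^4_{\mathscr{O}_X}\mathscr{F}|_U$ is free of rank $5$ on $x^4,x^3y,x^2y^2,xy^3,y^4$. A direct computation shows that $q$ carries $u^2,uv,uw,v^2,vw,w^2$ to $x^4,x^3y,x^2y^2,x^2y^2,xy^3,y^4$, so $\ker q$ is free of rank $1$, generated by $uw-v^2$. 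On the other hand the explicit local formula for $\phi_{2,2}$ gives $i\big((x\wedge y)(x\wedge y)\big)=(x\otimes x)(y\otimes y)-(x\otimes y)(y\otimes x)=uw-v^2$, which is exactly a generator of $\ker q$. Hence $i$ is a monomorphism with image $\ker q$ --- in particular $q\circ i=0$, as is also visible directly from the cancelling signs --- and since $q$ is surjective the sequence $0\to S^2_{\mathscr{O}_X}(\wedge^2_{\mathscr{O}_X}\mathscr{F})\xrightarrow{i}S^2_{\mathscr{O}_X}(S^2_{\mathscr{O}_X}\mathscr{F})\xrightarrow{q}S^4_{\mathscr{O}_X}\mathscr{F}\to 0$ is exact. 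This part uses nothing about the element $3$.

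For the retraction, the explicit local formula for $\varphi_{2,2}$ gives $\varphi_{2,2}(uw)=2\,(x\wedge y)(x\wedge y)$ and $\varphi_{2,2}(v^2)=-(x\wedge y)(x\wedge y)$, hence $\tau\circ i\big((x\wedge y)(x\wedge y)\big)=\varphi_{2,2}(uw-v^2)=3\,(x\wedge y)(x\wedge y)$; this is the $k=2$ case of the scalar $\tfrac{(k+1)!}{2}$ obtained earlier for $\varphi_{2,k}\circ\phi_{k,2}$ on a rank-$2$ sheaf. Since $(x\wedge y)(x\wedge y)$ generates $S^2_{\mathscr{O}_X}(\wedge^2_{\mathscr{O}_X}\mathscr{F})|_U$ and $i$, $\tau$ are canonical, this forces $\tau\circ i=3\cdot\mathrm{id}_{S^2_{\mathscr{O}_X}(\wedge^2_{\mathscr{O}_X}\mathscr{F})}$ on $X$. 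Finally, if $3$ is invertible in $\mathscr{O}_X$, then $\tfrac{1}{3}\tau$ is a retraction of $i$, so the short exact sequence splits and $S^2_{\mathscr{O}_X}(S^2_{\mathscr{O}_X}\mathscr{F})\cong S^2_{\mathscr{O}_X}(\wedge^2_{\mathscr{O}_X}\mathscr{F})\oplus S^4_{\mathscr{O}_X}\mathscr{F}$ canonically.

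The only real content is these two rank-$2$ calculations --- the kernel of $q$, and the value of $\varphi_{2,2}\circ\phi_{2,2}$ on the single generator $(x\wedge y)(x\wedge y)$ --- together with the routine remark that a local check suffices since $q$, $i$, $\tau$ are globally defined. I do not expect a genuine obstacle here: the rank identity $6=1+5$ already forces $\ker q$ to be an invertible sheaf, so exactness is essentially automatic once $i$ is seen to hit a generator of $\ker q$, and the scalar $3$ is completely pinned down by the explicit formulas; the one caveat --- that $3$ must be a unit for the splitting --- is precisely the hypothesis the statement builds in.
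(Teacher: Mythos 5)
Your proposal is correct, and the local computations it rests on all check out: over a rank-$2$ trivializing open, $\ker q$ is free on $uw-v^2$, $\phi_{2,2}$ sends the generator $(x\wedge y)\cdot(x\wedge y)$ precisely to $uw-v^2$, and $\varphi_{2,2}(uw-v^2)=3\,(x\wedge y)\cdot(x\wedge y)$, which is the $k=2$ case $\tfrac{(k+1)!}{2}=3$ of the scalar computed in Section 4. Where you diverge from the paper is in how $S^2_{\mathscr{O}_X}(\wedge^2_{\mathscr{O}_X}\mathscr{F})$ enters the sequence and why the sequence is exact. The paper first identifies the kernel abstractly: it invokes the lemma on determinants of extensions of locally free sheaves together with the canonical determinant isomorphism of Section 3 to get $\mathrm{Ker}(q)\cong(\wedge^2_{\mathscr{O}_X}\mathscr{F})^{\otimes 2}\cong S^2_{\mathscr{O}_X}(\wedge^2_{\mathscr{O}_X}\mathscr{F})$, and only then writes the inclusion locally by $(x\wedge y)\otimes(x\wedge y)\mapsto x^{\otimes 2}\otimes y^{\otimes 2}-x\otimes y\otimes x\otimes y$, which obliges it to verify by hand that this assignment transforms by $(\det)^2$ under a change of basis before gluing. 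You instead take $i=\phi_{2,2}$ and $\tau=\varphi_{2,2}$, the already-global canonical homomorphisms of Section 4, so globality and basis-independence come for free, and exactness follows directly from the single rank-$2$ computation showing $i$ carries a local generator onto a generator of $\ker q$; the determinant lemma and the change-of-basis check are never needed. The retraction is in substance the same map in both treatments (the permutation-defined $\sigma_{(23)}+\sigma_{(234)}$ induces exactly $\varphi_{2,2}$), and the splitting argument when $3$ is invertible in $\mathscr{O}_X$ is identical. Your route is the more economical one, reusing the Section 4 machinery with one local verification; the paper's route trades that economy for a demonstration of how the determinant formula of Section 3 pins down the kernel before any inclusion map is constructed.
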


\begin{proof}
As in this case~$\mathscr{F}$~is locally free of rank 2, we are
going to prove this theorem on local constructions.

First it's easy to
compute~$S_{\mathscr{O}_X}^2(\wedge_{\mathscr{O}_X}^2\mathscr{F})$ ,
$S_{\mathscr{O}_X}^2 (S_{\mathscr{O}_X}^2\mathscr{F})$
and~$S_{\mathscr{O}_X}^4\mathscr{F}$~are
free~$\mathscr{O}_X$-modules of rank 1,6 and 5. Also we have the
following commutative diagram:
$$\xymatrix{
\mathscr{F}^{\otimes_{\mathscr{O}_X}4}\ar[r]^{id}\ar[d]&\mathscr{F}^{\otimes_{\mathscr{O}_X}4}\ar[d]\\
S_{\mathscr{O}_X}^2 (S_{\mathscr{O}_X}^2\mathscr{F})\ar@{.>}[r]&
S_{\mathscr{O}_X}^4\mathscr{F}.}
$$
Hence there exists a canonical
homomorphism~$f:S_{\mathscr{O}_X}^2(S_{\mathscr{O}_X}^2\mathscr{F})\longrightarrow
\,S_{\mathscr{O}_X}^4\mathscr{F}$~which is defined by mapping local
sections of the
form~$m_1\otimes\,m_2\otimes\,m_3\otimes\,m_4\in\,S_{\mathscr{O}_X}^2(S_{\mathscr{O}_X}^2\mathscr{F})$~to
its image in~$S_{\mathscr{O}_X}^4\mathscr{F}$~, whose
kernel~$Ker(f)$~is free of rank 1, and we may get the short exact
sequence~$$0\longrightarrow\,Ker(f)
\longrightarrow\,S_{\mathscr{O}_X}^2(S_{\mathscr{O}_X}^2\mathscr{F})
\longrightarrow\,S_{\mathscr{O}_X}^4\mathscr{F}\longrightarrow0.$$
Using the upper remark and lemma, we
have~$$\wedge_{\mathscr{O}_X}^6(S_{\mathscr{O}_X}^2(S_{\mathscr{O}_X}^2\mathscr{F}))\cong
(\wedge_{\mathscr{O}_X}^1Ker(f))\otimes_{\mathscr{O}_X}
\wedge_{\mathscr{O}_X}^5(S_{\mathscr{O}_X}^4\mathscr{F})\cong\,
Ker(f)\otimes_{\mathscr{O}_X}\wedge_{\mathscr{O}_X}^5
(S_{\mathscr{O}_X}^4\mathscr{F})$$and~$$
(\wedge^2_{\mathscr{O}_X}\mathscr{F})^{\otimes\frac{2\cdot2\cdot6}{2}}
\cong\,Ker(f)\otimes_{\mathscr{O}_X}(\wedge_{\mathscr{O}_X}^2\mathscr{F})
^{\otimes\frac{5\cdot4}{2}}$$Hence we may
get~$Ker(f)\cong(\wedge_{\mathscr{O}_X}^2\mathscr{F})^{\otimes2}\cong\,S_{\mathscr{O}_X}^2(\wedge_{\mathscr{O}_X}^2\mathscr{F})$~and
the exact sequence as in the theorem.

Suppose~$x$~and~$y$~form a local basis
for~$\mathscr{F}|_U=\widetilde{M}$~on some affine open neighborhood
of~$U\subset\,X$, it is easy to check that~$Ker(f)|_U$~is generated
by the local sections of the
form~$(x^{\otimes2}\otimes\,y^{\otimes2}-x\otimes\,y\otimes\,x\otimes\,y)$,
the inclusion
of~$(\wedge_R^2M)^{\otimes2}\longrightarrow\,S_R^2(S_R^2M)$~is
defined by sending
~$(x\wedge\,y)\otimes(x\wedge\,y)$~to~$(x^{\otimes2}\otimes\,y^{\otimes2}-
x\otimes\,y\otimes\,x\otimes\,y)$. Next we need to check this
inclusion is independent of the choice of basis, then we may glue
together to get a canonical global homomorphism. More precisely,
if~$x'$~and~$y'$~form another basis of~$M$, then the gluing data is
reflected by a~$2\times2$~invertible matrix,
say~$\left(\begin{array}{c}x'\\y'\end{array}\right)=
\left(\begin{array}{cc}a&b\\c&d\end{array}\right)
\left(\begin{array}{c}x\\y\end{array}\right)$,where~$
\left(\begin{array}{cc}a&b\\c&d\end{array}\right)\in\,GL(2,R)$. We
need to check there exists the following equality:
$$\begin{array}{ccl}(det(f))^2(x^{\otimes2}\otimes\,y^{\otimes2}-x\otimes\,y\otimes\,x\otimes\,y)&
=&(x'^{\otimes2}\otimes\,y'^{\otimes2}-x'\otimes\,y'\otimes\,x'\otimes\,y')\\
&=&((ax+by)^{\otimes2}\otimes(cx+dy)^{\otimes2}-\\
&&(ax+by)\otimes(cx+dy)\otimes(ax+by)\otimes\\
&&\qquad(cx+dy)).
\end{array}$$
This can be proved by direct computation. Hence we may glue together
this homomorphism to get the exact sequence as described in the
theorem.

Our next aim is to construct "retraction" of the canonical
inclusion, which means we are going to construct a global
homomorphism
from~$S_{\mathscr{O}_X}^2(S_{\mathscr{O}_X}^2\mathscr{F})$~to
~$(\wedge_{\mathscr{O}_X}^2\mathscr{F})^{\otimes2}$. Note that they
are all quotient sheaf of~$\mathscr{F}^{\otimes4}$, so we may use
this original tensor sheaf~$\mathscr{F}^{\otimes4}$~to induce the
retraction. As~$\mathscr{F}^{\otimes4}$~admits permutation
group~$S_4$'s canonical action, we can define a homomorphism~$\phi$~
of~$\mathscr{F}^{\otimes4}$~as follows
$$\begin{array}{cccc}
\phi:&\mathscr{F}^{\otimes4}&\longrightarrow&\mathscr{F}^{\otimes4}\\
&f_1\otimes\,f_2\otimes\,f_3\otimes\,f_4&\mapsto&
\begin{array}{l}\sigma_{(23)}(f_1\otimes\,f_2\otimes\,f_3\otimes\,f_4)\\
+\sigma_{(234)}(f_1\otimes\,f_2\otimes\,f_3\otimes\,f_4)\\
\end{array}\end{array}$$
Where~$f_i$~are local sections
of~$\mathscr{F}$~and~$\sigma\in\,S_4$~permutate the coordinates
of~$\mathscr{F}^{\otimes4}$. Next we need to check the following
commutative diagram:
$$\xymatrix{
\mathscr{F}^{\otimes4}\ar[r]^\phi\ar[d]&\mathscr{F}^{\otimes4}\ar[d]\\
S_{\mathscr{O}_X}^2(S_{\mathscr{O}_X}^2 \mathscr{F})\ar@{.>}[r]&
S_{\mathscr{O}_X}^2(\wedge_ {\mathscr{O}_X}^2\mathscr{F})}$$By a
direct computation on local generators we know this is a commutative
diagram. Hence we get well defined homomorphism
from~$S_{\mathscr{O}_X}^2(S_{\mathscr{O}_X}^2
\mathscr{F})$~to~$(\wedge_
{\mathscr{O}_X}^2\mathscr{F})^{\otimes2}$, which we denote it
by~$\tau$.

As these homomorphisms are globally well defined, so in order to
check the composition property, we could fix a given basis,
as~$x$~and~$y$~we give before. In this case, we could give concrete
forms of~$\tau$. By a direct computation on local generators of
these sheaves, we have~$\tau\circ\,i=3$~as
$$\begin{array}{cl}
\tau(x^{\otimes2}\otimes\,y^{\otimes2}-x\otimes\,y\otimes\,x\otimes\,y)&=
x\wedge\,y\otimes\,x\wedge\,y+x\wedge\,y\otimes\,x\wedge\,y\\
&-x\wedge\,x\otimes\,y\wedge\,y-x\wedge\,y\otimes\,y\wedge\,x\\
&=3x\wedge\,y\otimes\,x\wedge\,y.\end{array}$$So if 3 is an
invertible element in~$\mathscr{O}_X$, which means if~$X$~is
an~$\mbox{Spec}(\mathbb{Z}[\frac{1}{3}])$~scheme, then the
inclusion~$i$~has retraction by making~$\tau'=\frac{1}{3}\tau$~and
the sequence is a natural split exact sequence. This finishes the
proof.
\end{proof}

\textbf{Remark:} Compare this theorem with theorem2.1, one can check
these 2 ways, globally define by permutation group's action and
locally define by local generators then glue together, are the same.

Before state the following example, we introduce some notations.
Note that
$\mathscr{F}^{\otimes4}$~admits~$S_4$'s permutation action
by permutate~$\mathscr{F}^{\otimes4}$'s coordinates, and this action
is canonical. Set~$\sigma=(12)\in\,S_4$, we will denote the
canonical automorphism of~$\mathscr{F}^{\otimes4}$~by permutating
the 1st and 2nd coordinate by~$\sigma_{(12)}$. The identity
element's action on~$\mathscr{F}^{\otimes4}$~will be denoted
by~$\sigma_{id}$.

Define the following homomorphism of~$\mathscr{F}^{\otimes4}$~as
follows:
$$\xymatrix{\mathscr{F}^{\otimes4}\ar[r]^{f_1}&
\mathscr{F}^{\otimes4}\ar[r]^{f_2}&\mathscr{F}^{\otimes4}
\ar[r]^{f_3}&\mathscr{F}^{\otimes4}}
$$
where~$f_1=\sigma_{id}-\sigma_{(23)}+\sigma_{(234)}$,~$f_2=\sigma_{(23)}-\sigma_{(234)}$~and~$f_3$~is
identity map. By a direct computation, we may get the following
homomorphism of tensor sheaves:
$$\xymatrix@C=0.5cm{\wedge_{\mathscr{O}_X}^4\mathscr{F} \ar[rr]^{\alpha_1} &&
S_{\mathscr{O}_X}^2(\wedge_{\mathscr{O}_X}^2\mathscr{F})
\ar[rr]^{\alpha_2} &&
S_{\mathscr{O}_X}^2(S_{\mathscr{O}_X}^2\mathscr{F})
\ar[rr]^{\alpha_3} &&S_{\mathscr{O}_X}^4\mathscr{F}.}$$We denote the
induced morphism by~$\alpha_1$,~$\alpha_2$~and~$\alpha_3$.

We can also define homomorphisms of~$\mathscr{F}^{\otimes4}$~as
follows:
$$\xymatrix{\mathscr{F}^{\otimes4}\ar[r]^{g_3}&
\mathscr{F}^{\otimes4}\ar[r]^{g_2}&\mathscr{F}^{\otimes4}
\ar[r]^{g_1}&\mathscr{F}^{\otimes4}}
$$
where~$g_3=\sigma_{id}+\sigma_{(23)}+\sigma_{(234)}$,~$g_2=\sigma_{(23)}+\sigma_{(234)}$~and~$f_3$~is
identity map. By a direct computation, we may get the following
homomorphism of tensor
sheaves:$$\xymatrix@C=0.5cm{S_{\mathscr{O}_X}^4\mathscr{F}
\ar[rr]^{\beta_3}&&S_{\mathscr{O}_X}^2(S_{\mathscr{O}_X}^2\mathscr{F})\ar[rr]^{\beta_2}
&& S_{\mathscr{O}_X}^2(\wedge_{\mathscr{O}_X}^2\mathscr{F})
\ar[rr]^{\beta_1}&&\wedge_{\mathscr{O}_X}^4\mathscr{F}.}$$ We denote
the induced homomorphism of tensor sheaves
by~$\beta_3$,~$\beta_2$~and~$\beta_1$.

Next we are going to prove the following theorem.
\begin{theorem}
There exists a canonical~$\mathscr{O}_X$-module complex:%
\begin{equation}\label{f6}\xymatrix@C=0.5cm{0\ar[r]&\wedge_{\mathscr{O}_X}^4\mathscr{F} \ar[rr]^{\alpha_1} &&
S_{\mathscr{O}_X}^2(\wedge_{\mathscr{O}_X}^2\mathscr{F})
\ar[rr]^{\alpha_2} &&
S_{\mathscr{O}_X}^2(S_{\mathscr{O}_X}^2\mathscr{F})
\ar[rr]^{\alpha_3}
&&S_{\mathscr{O}_X}^4\mathscr{F}\ar[r]&0.}\end{equation}

\qquad\qquad There also exists a canonical~$\mathscr{O}_X$-module complex:%
\begin{equation}\label{f7}\xymatrix@C=0.5cm{ 0\ar[r]&S_{\mathscr{O}_X}^4\mathscr{F}
\ar[rr]^{\beta_3}&&S_{\mathscr{O}_X}^2(S_{\mathscr{O}_X}^2\mathscr{F})\ar[rr]^{\beta_2}
&& S_{\mathscr{O}_X}^2(\wedge_{\mathscr{O}_X}^2\mathscr{F})
\ar[rr]^{\beta_1}&&\wedge_{\mathscr{O}_X}^4\mathscr{F}\ar[r]&0}\end{equation}
We have~$\beta_1\circ\alpha_1$~and~$\alpha_3\circ\beta_3$~are equal
to multiplication by 3. Furthermore if~$X$~is
an~$\mbox{Spec}(\mathbb{Z}[\frac{1}{3}])$~scheme, which means
if~$3$~is an invertible element in~$\mathscr{O}_X$~, the upper 2
complexes are both split exact sequences. Hence there exists a
canonical
isomorphism~$\wedge_{\mathscr{O}_X}^4\mathscr{F}\oplus\,S_{\mathscr{O}_X}^2(S_{\mathscr{O}_X}^2\mathscr{F})
\cong\,S_{\mathscr{O}_X}^2(\wedge_{\mathscr{O}_X}^2\mathscr{F})\oplus\,S_{\mathscr{O}_X}^4(\mathscr{F})$~\footnote{If~$\mathscr{F}$~is
free modules,then it can be computed by hand that the upper modules
are of the same rank
although it looks complicated.Namely we need to prove%
~$$\left(\begin{array}{c}n
\\4\\\end{array}\right)+\left(\begin{array}{c}
\left(\begin{array}{c}n+1 \\2 \\\end{array}\right)+1\\2
\\\end{array}\right)=\left(\begin{array}{c}\left(\begin{array}{c}n \\2 \\
\end{array}\right)+1 \\2 \\\end{array}\right)+\left(\begin{array}{c}
n+3 \\4 \\\end{array}\right).$$ This can be proved by easy
computation.}.
\end{theorem}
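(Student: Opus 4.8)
The plan is to reduce every part of the statement to an identity among explicit $\mathfrak{S}_4$-operators on $\mathscr{F}^{\otimes4}$, and then, once $3$ is invertible, to produce a contracting homotopy for the identity morphism of each complex out of the maps already constructed. By construction $\alpha_1,\alpha_2,\alpha_3$ (resp.\ $\beta_3,\beta_2,\beta_1$) are the maps induced on the relevant subquotients of $\mathscr{F}^{\otimes4}$ by the operators $f_1=\sigma_{id}-\sigma_{(23)}+\sigma_{(234)}$, $f_2=\sigma_{(23)}-\sigma_{(234)}$, $f_3=\sigma_{id}$ (resp.\ $g_3=\sigma_{id}+\sigma_{(23)}+\sigma_{(234)}$, $g_2=\sigma_{(23)}+\sigma_{(234)}$, $g_1=\sigma_{id}$). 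Consequently every composite of these arrows is induced by the corresponding product of operators on $\mathscr{F}^{\otimes4}$, so an identity of composites can be checked by evaluating on local sections $a_1\otimes a_2\otimes a_3\otimes a_4$ and checking compatibility with the ideals of definition of the four quotient sheaves. In this way I would first verify $\alpha_2\circ\alpha_1=0$, $\alpha_3\circ\alpha_2=0$, $\beta_2\circ\beta_3=0$ and $\beta_1\circ\beta_2=0$, which is precisely the assertion that (\ref{f6}) and (\ref{f7}) are complexes; this step needs no hypothesis on $\mathscr{F}$.

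The two multiplication-by-$3$ statements come out of the same description. Since $g_1=\sigma_{id}$, the composite $\beta_1\circ\alpha_1$ is the endomorphism of $\wedge_{\mathscr{O}_X}^4\mathscr{F}$ induced by $f_1$, and $\mathfrak{S}_4$ acts on $\wedge_{\mathscr{O}_X}^4\mathscr{F}$ through the sign character, so $f_1$ acts as $1-(-1)+1=3$; dually $\alpha_3\circ\beta_3$ is the endomorphism of $S_{\mathscr{O}_X}^4\mathscr{F}$ induced by $f_3\circ g_3=g_3$, and every permutation acts trivially on $S_{\mathscr{O}_X}^4\mathscr{F}$, so $g_3$ acts as $1+1+1=3$. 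The remaining ingredient, which is what makes the exactness work, is the pair of identities
\begin{equation}\label{f8}\alpha_1\circ\beta_1+\beta_2\circ\alpha_2=3\cdot\mathrm{id}_{S_{\mathscr{O}_X}^2(\wedge_{\mathscr{O}_X}^2\mathscr{F})},\qquad \alpha_2\circ\beta_2+\beta_3\circ\alpha_3=3\cdot\mathrm{id}_{S_{\mathscr{O}_X}^2(S_{\mathscr{O}_X}^2\mathscr{F})}.\end{equation}
These I would prove by evaluating on local generators: writing $[ab|cd]$ for $(a\wedge b)\cdot(c\wedge d)$ and $\langle ab|cd\rangle$ for $(a\cdot b)\cdot(c\cdot d)$, one gets $\alpha_1\beta_1([ab|cd])=[ab|cd]-[ac|bd]+[ad|bc]$ and $\beta_2\alpha_2([ab|cd])=2[ab|cd]+[ac|bd]-[ad|bc]$, whose sum is $3[ab|cd]$; symmetrically $\beta_3\alpha_3(\langle ab|cd\rangle)=\langle ab|cd\rangle+\langle ac|bd\rangle+\langle ad|bc\rangle$ and $\alpha_2\beta_2(\langle ab|cd\rangle)=2\langle ab|cd\rangle-\langle ac|bd\rangle-\langle ad|bc\rangle$, which sum to $3\langle ab|cd\rangle$.

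Now suppose $3$ is invertible in $\mathscr{O}_X$. Regarding (\ref{f6}) as a complex with differentials $\alpha_1,\alpha_2,\alpha_3$ (so that $\wedge_{\mathscr{O}_X}^4\mathscr{F}$ sits in the top degree), the maps $\tfrac13\beta_1$, $\tfrac13\beta_2$, $\tfrac13\beta_3$ placed in the appropriate degrees form a contracting homotopy $h$ for the identity morphism: the relations $\tfrac13\beta_1\alpha_1=\mathrm{id}$, $\tfrac13(\alpha_1\beta_1+\beta_2\alpha_2)=\mathrm{id}$, $\tfrac13(\alpha_2\beta_2+\beta_3\alpha_3)=\mathrm{id}$ and $\tfrac13\alpha_3\beta_3=\mathrm{id}$ are exactly $dh+hd=\mathrm{id}$ in the four degrees, by the identities just established. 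Since (\ref{f6}) is a complex, the existence of $h$ forces it to be split exact. Symmetrically $\tfrac13\alpha_1$, $\tfrac13\alpha_2$, $\tfrac13\alpha_3$ is a contracting homotopy for the identity of (\ref{f7}), which is therefore split exact as well. In particular $\alpha_1$ is a split monomorphism with retraction $\tfrac13\beta_1$ and $\alpha_3$ is a split epimorphism with section $\tfrac13\beta_3$; splitting (\ref{f6}) into two short exact sequences and using exactness at the two middle terms gives $S_{\mathscr{O}_X}^2(\wedge_{\mathscr{O}_X}^2\mathscr{F})\cong\wedge_{\mathscr{O}_X}^4\mathscr{F}\oplus\mathrm{Ker}(\alpha_3)$ and $S_{\mathscr{O}_X}^2(S_{\mathscr{O}_X}^2\mathscr{F})\cong\mathrm{Ker}(\alpha_3)\oplus S_{\mathscr{O}_X}^4\mathscr{F}$; adding these yields the claimed isomorphism $\wedge_{\mathscr{O}_X}^4\mathscr{F}\oplus S_{\mathscr{O}_X}^2(S_{\mathscr{O}_X}^2\mathscr{F})\cong S_{\mathscr{O}_X}^2(\wedge_{\mathscr{O}_X}^2\mathscr{F})\oplus S_{\mathscr{O}_X}^4\mathscr{F}$.

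I expect the main obstacle to be the bookkeeping in the first two paragraphs: keeping track of the $\mathfrak{S}_4$-operators, their products, and their compatibility with the ideals of definition of the four subquotients of $\mathscr{F}^{\otimes4}$. The only conceptual point is the realization that the contracting homotopy should be sought among the maps $\beta_i$ (resp.\ $\alpha_i$) themselves, after which everything reduces to the two relations (\ref{f8}); note moreover that the homotopy argument in fact proves split exactness, and hence the direct-sum decomposition, for an arbitrary $\mathscr{O}_X$-module $\mathscr{F}$, in agreement with the rank count in the footnote when $\mathscr{F}$ is locally free.
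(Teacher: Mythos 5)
Your proposal is correct and takes essentially the same route as the paper: both reduce everything to $\mathfrak{S}_4$-operator identities on $\mathscr{F}^{\otimes4}$ — the complex property, the two multiplication-by-$3$ relations, and the middle identities $\alpha_1\beta_1+\beta_2\alpha_2=3\,\mathrm{id}$ and $\alpha_2\beta_2+\beta_3\alpha_3=3\,\mathrm{id}$, which the paper derives in the equivalent form $\beta_2\circ\alpha_2=3\,\mathrm{id}-\alpha_1\beta_1$ (and similarly for $\beta_3\circ\alpha_3$) and then uses by chasing kernel elements once $3$ is invertible. Your repackaging of these relations as the contracting homotopy $\tfrac13\beta_i$ (resp.\ $\tfrac13\alpha_i$) is only a cleaner organization of that same step, with the added benefit of making the splittings explicit, and your local formulas check out.
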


\begin{proof}
First we will check the complex properties of (\ref{f6}) and
(\ref{f7}). Note that all~$\alpha_i$~and~$\beta_i$~are induced from
homomorphisms of~$\mathscr{F}^{\otimes4}$, so in order to check the
complex property of (\ref{f6}) and (\ref{f7}). We don't have to
check on~$\alpha_i$~or~$\beta_i$, we just need to compute the
composition of~$f$~or~$g$, and prove the image in the corresponding
sheaf is trivial. Take~$\alpha_2\circ\alpha_1$~as example. We
have$$\begin{array}{rl}
f_2\circ\,f_1&=f_2(\sigma_{id}-\sigma_{(23)}+\sigma_{(234)})\\
&=(\sigma_{(23)}-\sigma_{(234)})-(\sigma_{id}-\sigma_{(24)})+(\sigma_{(34)}-\sigma_{(243)})\\
\end{array}$$
Consider its image in the
sheaf~$S_{\mathscr{O}_X}^2(S_{\mathscr{O}_X}^2\mathscr{F})$, it is
easy to check it is 0. We may also compute~$\beta_2\circ\beta_2$.
As$$\begin{array}{rl}g_2\circ\,g_3&=(\sigma_{(23)}+\sigma_{(234)})
\circ(\sigma_{id}+\sigma_{(23)}+\sigma_{(234)})\\
&=(\sigma_{(23)}+\sigma_{(234)})+(\sigma_{id}+\sigma_{(24)})+
(\sigma_{(34)}+\sigma_{(243)})\\
\end{array}$$Consider its image in the sheaf~$S_{\mathscr{O}_X}^2
(\wedge_{\mathscr{O}_X}^2\mathscr{F})$, we also
have~$\beta_2\circ\beta_3=0$. As ~$f_3$~and~$g_1$~are induced from
identity, the left can be proved by easy computation.

Next we need to check the composition properties of these
homomorphisms. Take~$\beta_1\circ\alpha_1$~as an example. We have
$$\begin{array}{cl}
g_1\circ\,f_1&=\sigma_{id}\circ(\sigma_{id}-\sigma_{(23)}+\sigma_{(234)})\\
&=\sigma_{id}-\sigma_{(23)}+\sigma_{(234)}.
\end{array}$$ Note that in~$\wedge_{\mathscr{O}_X}^4\mathscr{F}$, all these terms is equal
to~$\sigma_{id}$'s action, hence the composition is multiply by~$3$.
Similar result can be proved for~$\alpha_3\circ\beta_3$,
as
$$\begin{array}{cl}
f_3\circ\,g_3&=\sigma_{id}\circ(\sigma_{id}
+\sigma_{(23)}+\sigma_{(234)})\\
&=\sigma_{id} +\sigma_{(23)}+\sigma_{(234)}
\end{array}$$
Note that the image is in the
sheaf~$S_{\mathscr{O}_X}^4\mathscr{F}$, hence they are all equal
to~$\sigma_{id}$'s action, which means the composition is multiply
by~$3$.

When~$X$~is an~$\mbox{Spec}(\mathbb{Z}[\frac{1}{3}])$~scheme, which
means if~$3$~is an invertible element in~$\mathscr{O}_X$, we
have~$\alpha_1$~and~$\beta_3$~are
injective,~$\alpha_3$~and~$\beta_1$~are surjective. This proves the
exactness of~$(\ref{f6})$~and~$(\ref{f7})$~at the point
of~$\wedge_{\mathscr{O}_X}^4\mathscr{F}$~and~$S_{\mathscr{O}_X}^4\mathscr{F}$~.
As~$$\begin{array}{cl}
g_2\circ\,f_2&=(\sigma_{(23)}+\sigma_{(234)})\circ(\sigma_{(23)}-\sigma_{(234)})\\
&=(\sigma_{id}-\sigma_{(34)})+(\sigma_{(24)}-\sigma_{(243)})
\end{array}$$
If we consider its image in the
sheaf~$S_{\mathscr{O}_X}^2(\wedge_{\mathscr{O}_X}^2\mathscr{F})$, we
have2$$\begin{array}{cl}&(\sigma_{id}-\sigma_{(34)})+(\sigma_{(24)}-\sigma_{(243)})\\
=&3\sigma_{id}-(\sigma_{id}-\sigma_{(24)}+\sigma_{(243)})
\end{array}$$Which is just~$3\sigma_{id}-\alpha_1$, hence we
get~$\beta_2\circ\alpha_2=3id-\alpha_1$. Hence for any local
section~$f$~of the subsheaf~$Ker(\alpha_2)$~, we have local
section~$f'\in\wedge_{\mathscr{O}_X}^4\mathscr{F}$~such
that~$3f-\alpha_1(f')=0$~, namely we
have~$f=\frac{1}{3}\alpha_1(f')$~. This
means~$\mbox{Ker}(\alpha_2)=\mbox{im}(\alpha_1)$~. So the exactness
at~$S_{\mathscr{O}_X}^2(\wedge_{\mathscr{O}_X}^2\mathscr{F})$~in the
complex~$(\ref{f6})$~is exact. Similar result can be proved for
exactness
at~$S_{\mathscr{O}_X}^2(S_{\mathscr{O}_X}^2\mathscr{F})$~for the
complex~$(\ref{f7})$. At last we need to check the exactness
property at~$S_{\mathscr{O}_X}^2(S_{\mathscr{O}_X}^2\mathscr{F})$~in
the complex~$(\ref{f6})$~. As
$$g_3\circ\,f_3=(\sigma_{id}+\sigma_{(23)}+\sigma_{(234)})\circ\sigma_{id}=\sigma_{id}+\sigma_{(23)}+\sigma_{(234)}.$$Consider
its image in the
sheaf~$S_{\mathscr{O}_X}^2(S_{\mathscr{O}_X}^2\mathscr{F})$, we have
the image is equal to~$\sigma_{id}+\sigma_{(23)}+\sigma_{(24)}$,
also note that
$$\begin{array}{c}
\alpha_2=\sigma_{(23)}-\sigma_{(234)}\\
\alpha_2\circ\sigma_{(24)}=\sigma_{(243)}-\sigma_{(34)}\\
\alpha_2\circ\sigma_{(243)}=\sigma_{(24)}-\sigma_{id}
\end{array}
$$
So in the
sheaf~$S_{\mathscr{O}_X}^2(S_{\mathscr{O}_X}^2\mathscr{F})$, there
images are equal
to~$\sigma_{(23)}-\sigma_{(24)}$,~$\sigma_{(23)}-\sigma_{id}$~and
~$\sigma_{(24)}-\sigma_{id}$. Hence we
have~$\beta_3\circ\alpha_3=3\sigma_{id}+\alpha_2(\sigma_{(24)}+\sigma_{(243)})$.
As~$\sigma_{(24)}$~and~$\sigma_{(243)}$~can be regarded as
automorphisms, hence we may
write~$\beta_3\circ\alpha_3=3\sigma_{id}+\alpha_2$, so the exactness
of (\ref{f6})
at~$S_{\mathscr{O}_X}^2(S_{\mathscr{O}_X}^2\mathscr{F})$~is proved.
Similar result can be proved
at~$S_{\mathscr{O}_X}^2(\wedge_{\mathscr{O}_X}^2\mathscr{F})$
in the
complex~$(\ref{f7})$~. This ends the proof.
\end{proof}

\newpage


\begin{thebibliography}{3344}
\bibitem{B1}N.Bourbaki: Algebre. N.Bourbaki et
Springer-Verlag Berlin Heidelberg (2006).
\bibitem{B2}N.Bourbaki: Algebre Commtative. N.Bourbaki et
Springer-Verlag Berlin Heidelberg (2006).
\bibitem{FH}Willam Fulton. Joe Harris: Representation Theory(A First
Course).\ GTM129.\ Springer-Verlag(1991)
\bibitem{H}R.Hartshorn: Algebraic Geometry.\ GTM52.\
Springer-Verlag(1977).
\bibitem{L1}Ke-Zheng Li: Abstract Algebra(in Chinese).
\bibitem{L2}Ke-Zheng Li: Commutative Algebra and Homological Algebra(in
Chinese).\ Sci.Pub.(1998),2nd print(1999).
\bibitem{L3}Ke-zheng Li: Moduli Space And Its Applications(in
Chinese). MCM-CNU Lectures(2010-2011).
\bibitem{M}Hideyuki Matsumura: Commutative Ring Theory(English Version translated by M. Reid).\ Cambridge University Press
\bibitem{W}William C.Waterhouse: Introduction to Affine Group
Schemes.\ GTM66.\ Springer-Verlag(1979).
\bibitem{S}Jean-Pieere Serre: Linear Representations of Finite
Groups.\ GTM42.\ Springer(1966).
\end{thebibliography}
\end{document}